\newcolumntype{C}[1]{>{\centering\arraybackslash}p{#1}} % zentriert mit Breitenangabe
\newcommand\footnoteref[1]{\protected@xdef\@thefnmark{\ref{#1}}\@footnotemark}
\def\cl@chapter{\@elt {theorem}}
  \def\@pdfborder{0 -1 1}
\g@addto@macro\bfseries{\boldmath}
\newtheorem{Theorem}{Theorem}%[section]
\newtheorem*{Theorem*}{Theorem}%[section]
\newtheorem{Assumption}{Assumption}%[section]
\newtheorem{Lemma}{Lemma}%[section]
\newtheorem{Corollary}{Corollary}%[section]
\newcommand{\R}{\mathbb{R}} % reelle
\newcommand{\N}{\mathbb{N}} % natuerliche
\newcommand{\E}{\mathbb{E}} % Erwartungswert
\renewcommand{\P}{\mathbb{P}} % Wkt
\newcommand*\diff{\mathop{}\!\mathrm{d}}
\def\sectionautorefname#1\null{Section#1\null}
\def\equationautorefname#1#2\null{Eq.#1(#2\null)}
\DeclareMathOperator{\Var}{Var}
\DeclareMathOperator{\Bias}{Bias}
\DeclareMathOperator{\MSE}{MSE}
\DeclareMathOperator{\MISE}{MISE}
\DeclareMathOperator{\Poi}{Poi}
\DeclareMathOperator{\Beta}{Beta}
\DeclareMathOperator\supp{supp}
\def\Eq#1{Eq. (\ref{#1})}
\newcommand{\FourFigureVeranschaulichungHermite}[3][0.45\textwidth]{%
\begin{figure*}%[hh]
	%
	%\hspace*{\fill}
	\includegraphics[width=#1]{#2_1.pdf}
	\hspace*{\fill}
%	\includegraphics[width=#1]{Figures/#2_2.pdf}
%	\hspace*{\fill}\\
	%\hspace*{\fill}
	\includegraphics[width=#1]{#2_3.pdf}
	\hspace*{\fill} \\
	\includegraphics[width=#1]{#2_4.pdf}
	\hspace*{\fill}
	%\hspace*{\fill}
%	\includegraphics[width=#1]{Figures/#2_5.pdf}
%	\hspace*{\fill}
	\includegraphics[width=#1]{#2_6.pdf}
	%\hspace*{\fill}
	%
	\caption{#3}
	\label{#2}
\end{figure*}
}%
\newcommand{\DoubleFigureHere}[3][0.45\textwidth]{%
	\begin{figure*}%[hh]
		%
		%\hspace*{\fill}
		\includegraphics[width=#1]{#2_1.pdf}
		%\hspace*{\fill}
		\includegraphics[width=#1]{#2_2.pdf}
		%\hspace*{\fill}
		%
		\caption{#3}
		\label{#2}
	\end{figure*}
}%
\newcommand{\SingleFigureHere}[4]{%
	\begin{figure*}%[hh]
		\begin{center}
     	\includegraphics[width=#1]{#2.pdf}
		\vspace*{-5mm}
		\end{center}
		\caption{#3}
		\label{#4}
	\end{figure*}
}%
\begin{document}

\title{Smooth Distribution Function Estimation for Lifetime Distributions using Szasz-Mirakyan Operators}

%\titlerunning{Distribution Function Estimation using Szasz Operators}

\author{Ariane Hanebeck \\
        \small{Institute of Applied Mathematical Statistics}  \\
        \small{Technical University of Munich} \\
        \small{Ariane.Hanebeck@tum.de}
        \and
        Bernhard Klar \\
        \small{Institute of Stochastics}  \\
        \small{Karlsruhe Institute of Technology} \\
        \small{Bernhard.Klar@kit.edu}
        }

\maketitle

\begin{abstract}
In this paper, we introduce a new smooth estimator for continuous distribution functions on the positive real half-line using Szasz-Mirakyan operators, similar to Bernstein's approximation theorem. We show that the proposed estimator outperforms the empirical distribution function in terms of asymptotic (integrated) mean-squared error, and generally compares favourably with other competitors in theoretical comparisons. Also, we conduct the simulations to demonstrate the finite sample performance of the proposed estimator.
\end{abstract}
\noindent {\small {\itshape Keywords.}}\quad Distribution function estimation, Nonparametric, Szasz-Mirakyan operator, Hermite estimator, Mean squared error, Asymptotic properties

\section{Introduction}

%Life Distributions: Structure of Nonparametric, Semiparametric, and Parametric Families, Albert W. Marshall, Ingram Olkin, Springer, 2007

%Loss Distributions, Robert V. Hogg Stuart A. Klugman, John Wiley & Sons, 1984

This paper considers the nonparametric smooth estimation of continuous distribution functions on the positive real half line. Arguably, such distributions are the most important univariate probability models, occuring in  diverse fields such as life sciences, engineering, actuarial sciences or finance,  under various names such as life, lifetime, loss or survival distributions. The well-known compendium of \cite{johnsonContinuousUnivariateDistributions1994} treats in its first volume solely distributions on the positive half line with the exception of the normal and the Cauchy distribution. In the two volumes \cite{johnsonContinuousUnivariateDistributions1994,johnsonContinuousUnivariateDistributions1995} as well as in the compendiums about life and loss distributions of \cite{marshallLifeDistributionsStructure2007} and \cite{hoggLossDistributions1984}, respectively, an abundance of parametric models for the distribution of non-negative random variables and pertaining estimation methods can be found.

However, there is a paucity of nonparametric estimation methods especially tailored to this situation. It is the aim of this paper to close this gap by introducing a new nonparametric estimator for distribution functions on $[0,\infty)$ using Szasz-Mirakyan operators.

%\subsection{What is distribution function estimation?}
Let $X_1, X_2, ...$ be a sequence of independent and identically distributed (i.i.d.) random variables having an underlying unknown distribution function $F$  and associated density function $f$.
%Now, the task is to estimate $F$, given a finite random sample $X_1,...,X_n, n \in \N$.
In the case of \textit{parametric} distribution function estimation, the model structure is already defined before knowing the data. It is for example known that the distribution will be of the form $\mathcal{N}(\mu,\sigma^2)$. The only goal is to estimate the parameters, here $\mu$ and $\sigma^2$.
Compared to this, in the \textit{nonparametric} setting, the model structure is not specified a priori but is determined only by the sample. In this paper, all the considered estimators are of nonparametric type.

%\subsection{Why is distribution function estimation important?}
The goal is to investigate properties of a random sample and its underlying distribution.
%As the random variables are i.i.d., we consider the properties of $X_1$ without loss of generality.
Of utmost importance is the probability $\P(a \leq X_1 \leq b)=F(b)-F(a)$, which can directly be estimated without the need to integrate as in the density estimation setting.
By taking the inverse of $F$, it is also possible to calculate quantiles
\begin{equation*}
    x_p=\inf\{x \in \R: F(x) \geq p\}=F^{-1}(p).
\end{equation*}
An important application of the inverse of $F$ is the so-called Inverse Transform Sampling.
%Also by using the inverse of $F$, the so-called Inverse Transform Sampling can be applied to get more samples than already given.
It can be used to generate more samples than already given using the implication
\begin{equation*}
    Y \sim U[0,1] \Rightarrow F^{-1}(Y) \sim X_1.
\end{equation*}
%The intuition of ITS is shown in %\autoref{InverseSampling}:
%Given a random number $r \in [0,1]$ that corresponds to $Y$, the number $F_{X_1}^{-1}(r)$, corresponding to $F_{X_1}^{-1}(Y)$, is the new sample.
%Distribution function estimation can also be used to estimate lifetime distributions which directly emerge from estimating the distribution function.

% \subsection{State of the Art}

The best-known distribution function estimators with well-established properties  are the empirical distribution function (EDF) and the kernel estimator.

The EDF is the simplest way to estimate the underlying distribution function, given a finite random sample $X_1,...,X_n, n \in \N$.
%The idea is to use the strong law of large numbers.
It is defined by
    \begin{equation*}
        F_n(x)=\frac{1}{n}\sum_{i=1}^n \mathbb{I}(X_i \leq x),
    \end{equation*}
where $\mathbb{I}$ is the indicator function.
%For the EDF, the properties mainly follow from famous theorems. The uniform, almost sure convergence follows from the Glivenko-Cantelli theorem while the asymptotic normality can be proven with the central limit theorem. The other properties are easy to calculate.
This estimator is obviously not continuous. The kernel distribution function  estimator, however, is a continuous estimator.
% For the kernel estimator, let $X_1, X_2, ...$ be i.i.d. random variables that have an underlying unknown distribution function $F$ and unknown density function $f$. Given a finite random sample $X_1,...,X_n, n \in \N$,
The univariate kernel density estimator is defined by
\begin{equation*}
    f_{h,n}(x)=\frac{1}{nh}\sum_{i=1}^nK\Bigg(\frac{x-X_i}{h}\Bigg), x \in \R,
\end{equation*}
where the parameter $h >0$ is called the bandwidth and $K: \R \to \R$ is a kernel that has to fulfill specific properties (see, e.g., \cite{gramackiNonparametricKernelDensity2018}). It was first introduced by \cite{rosenblattRemarksNonparametricEstimates1956} and \cite{parzenEstimationProbabilityDensity1962}.

The idea is that the number of kernels is higher in regions with many samples, which leads to a higher density. The width and height of each kernel is determined by the bandwidth $h$. In the above case, the bandwidth is the same for all kernels.

To estimate the distribution function, the kernel density estimator is integrated. %This means that if the distribution function is of the form
%    \begin{equation*}
%        F(x)=\int_{-\infty}^xf(u)\diff u
%    \end{equation*}
%    for a density function $f$,
Hence, the kernel distribution function estimator is of the form
    \begin{equation*}
        \label{Eq:KernelEstimator}
        F_{h,n}(x)=\int_{-\infty}^x f_{h,n}(u) \diff u=\int_{-\infty}^x\frac{1}{nh}\sum_{i=1}^nK\Bigg(\frac{u-X_i}{h}\Bigg)\diff u=\frac{1}{n}\sum_{i=1}^n\mathbb{K}\Bigg(\frac{x-X_i}{h}\Bigg),
    \end{equation*}
where
      $\mathbb{K}(t)=\int_{-\infty}^tK(u)\diff u$
is a cumulative kernel function.
This estimator was first introduced by \cite{yamatoUniformConvergenceEstimator1973}. Different methods to choose the bandwidth in the case of the distribution function are given in \cite{altmanBandwidthSelectionKernel1995}, \cite{bowmanBandwidthSelectionSmoothing1998}, \cite{polanskyMultistagePlugBandwidth2000}, and \cite{tenreiroAsymptoticBehaviourMultistage2006}.
%For the kernel estimator, the properties are well known and can be found in the literature. The respective papers can be found in \cite{hanebeckNonparametricDistributionFunction2020}.

The two previous estimators can estimate distribution functions on any arbitrary real interval. The Bernstein estimator, on the other hand, is designed for functions on $[0,1]$.

The goal of the Bernstein estimator is the estimation of a distribution function $F$ with density $f$ supported on $[0,1]$, given a finite random sample $X_1,...,X_n, n \in \N$. It makes use of the following theorem.
\begin{Theorem*}
    \label{Theorem:Feller}
    If $u$ is a continuous function on $[0,1]$, then as $m \rightarrow \infty$,
    \begin{equation*}
    B_m(u;x)=\sum_{k=0}^m u\left(\frac{k}{m}\right)P_{k,m}(x) \rightarrow u(x)
    \end{equation*}
    uniformly for $x \in [0,1]$, where $P_{k,m}=\binom{m}{k}x^k(1-x)^{m-k}$ are the Bernstein basis polynomials.
\end{Theorem*}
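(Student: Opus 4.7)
The plan is to follow the classical probabilistic proof of Bernstein's theorem, exploiting the fact that $P_{k,m}(x)$ is exactly the mass function of a $\Bin(m,x)$ random variable.

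First I would introduce, for each fixed $x \in [0,1]$, a random variable $Y_m \sim \Bin(m,x)$, so that $B_m(u;x) = \E[u(Y_m/m)]$. Since $\sum_{k=0}^m P_{k,m}(x) = 1$, we can write
\begin{equation*}
    B_m(u;x) - u(x) = \sum_{k=0}^m \left( u\!\left(\tfrac{k}{m}\right) - u(x) \right) P_{k,m}(x).
\end{equation*}
The goal is then to show that the right-hand side tends to $0$ uniformly in $x$.

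Next I would invoke the fact that $u$, being continuous on the compact interval $[0,1]$, is both uniformly continuous and bounded, say $|u| \leq M$. Fix $\varepsilon > 0$ and choose $\delta > 0$ such that $|u(y)-u(x)| < \varepsilon/2$ whenever $|y-x| < \delta$. I would then split the index set into $A_x = \{k : |k/m - x| < \delta\}$ and its complement. On $A_x$, the contribution is bounded above by $\varepsilon/2$ since the probabilities sum to at most $1$. On the complementary set, the contribution is at most $2M \cdot \P(|Y_m/m - x| \geq \delta)$.

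The crux is therefore to bound $\P(|Y_m/m - x| \geq \delta)$ uniformly in $x$. For this I would apply Chebyshev's inequality together with the identity $\Var(Y_m/m) = x(1-x)/m$, which gives
\begin{equation*}
    \P\!\left( \left|\tfrac{Y_m}{m} - x\right| \geq \delta \right) \leq \frac{x(1-x)}{m\delta^2} \leq \frac{1}{4m\delta^2},
\end{equation*}
using the elementary bound $x(1-x) \leq 1/4$ on $[0,1]$. Choosing $m \geq M/(\varepsilon \delta^2)$ then makes the bad-region contribution at most $\varepsilon/2$, and the two estimates combine to give $|B_m(u;x) - u(x)| < \varepsilon$ for all $x \in [0,1]$ simultaneously, which is the desired uniform convergence. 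The only conceptually delicate point is ensuring the variance bound is uniform in $x$, but this is handled neatly by the $x(1-x) \leq 1/4$ estimate, so I do not anticipate a substantial obstacle.
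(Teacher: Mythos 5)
Your proof is correct and complete: the decomposition via $\sum_k P_{k,m}(x)=1$, the use of uniform continuity on the compact interval, and the Chebyshev bound with $x(1-x)\leq 1/4$ giving uniformity in $x$ are all sound. The paper states this result without proof as classical background (its internal label points to Feller), and your argument is exactly the standard probabilistic proof the authors implicitly rely on, so there is nothing to add.
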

Using this theorem, $F$ can be represented by the expression
    \begin{equation*}
        B_m(F;x)=\sum_{k=0}^m F\left(\frac{k}{m}\right)P_{k,m}(x),
    \end{equation*}
which converges to $F$ uniformly for $x \in [0,1]$. As the distribution function $F$ is unknown, the idea now is to replace $F$ with the EDF $F_n$. Following \cite{leblancEstimatingDistributionFunctions2012}, this leads to the Bernstein estimator
    \begin{equation*}
        \hat{F}_{m,n}(x)= \sum_{k=0}^m F_n\left(\frac{k}{m}\right)P_{k,m}(x).
    \end{equation*}
%We always assume that $m=m_n$ depends on $n$.
%The properties for the Bernstein estimators mainly follow from \cite{leblancEstimatingDistributionFunctions2012}, where some results are using ideas from \cite{babuApplicationBernsteinPolynomials2002}.

A further estimator is the Hermite estimator on the real half line that can be defined for different intervals.
It makes use of the so-called Hermite polynomials $H_k$ that are defined by
    \begin{equation*}
        H_k(x)=(-1)^ke^{x^2}\frac{d^k}{\diff x^k}e^{-x^2}.
    \end{equation*}
These polynomials are orthogonal under $e^{-x^2}$.
The normalized Hermite functions are given by
    \begin{equation*}
        \label{Eq:NormalizedHermite}
        h_k(x)=(2^kk!\sqrt{\pi})^{-1/2}e^{\frac{-x^2}{2}}H_k(x).
    \end{equation*}
They form an orthonormal basis for $L_2$. We define
    \begin{equation*}
        Z(x)=\frac{1}{\sqrt{2 \pi}}e^{\frac{-x^2}{2}} \, ,
        \alpha_k=\frac{\sqrt{\pi}}{2^{k-1}k!},
    \end{equation*}
    and
    \begin{equation*}
        a_k=\int_{-\infty}^{\infty}f(x)h_k(x)\diff x.
    \end{equation*}
%As already mentioned before, the normalized Hermite functions $h_k$, defined in \autoref{Eq:NormalizedHermite}, form an orthonormal basis for $L_2$. Using this result, it makes sense that
Now, for $f \in L_2$,
    \begin{align}
        f(x)=\sum_{k=0}^{\infty}a_k h_k(x)=\sum_{k=0}^{\infty} \sqrt{\alpha_k} \cdot a_k H_k(x)Z(x)\label{Eq:InfiniteSum}.
    \end{align}
The infinite sum in \autoref{Eq:InfiniteSum} is not desirable. A truncation of the sum leads to the $N+1$ truncated expansion
    \begin{align*}
        f_N(x)&=\sum_{k=0}^Na_k h_k(x)=\sum_{k=0}^N \sqrt{\alpha_k}\cdot a_k H_k(x)Z(x).
    \end{align*}
The coefficients $a_k$ are chosen so that the $L_2$-distance between $f$ and $f_N$ is minimized. A detailed explanation can be found in Section 2.3 of \cite{davisFourierSeriesOrthogonal1963}. %[2.3 Orthogonal sequences]
%If $N=N(n)$ depends on $n$, which is always the case unless explicitly mentioned, we assume that $N \rightarrow \infty$ for $n \rightarrow \infty$.
Now, the density estimator is of the form
    \begin{align*}
        \label{Eq:DensityGauss}
        \hat{f}_{N,n}(x)&=\sum_{k=0}^N \hat{a}_kh_k(x)=\sum_{k=0}^N \sqrt{\alpha_k}\cdot \hat{a}_kH_k(x)Z(x)
    \end{align*}
    with $\hat{a}_k=\frac{1}{n}\sum_{i=1}^n h_k(X_i)$
Using this, the Hermite distribution function estimators on the half line and the real line are defined by
    \begin{equation*}
        \hat{F}_{N,n}^H(x)=\int_{0}^x\hat{f}_{N,n}(t)\diff t, \enspace \text{and} \enspace \hat{F}_{N,n}^F(x)=\int_{-\infty}^x\hat{f}_{N,n}(t)\diff t
    \end{equation*}
respectively, following \cite{stephanouSequentialQuantilesHermite2017} \cite{stephanouPropertiesHermiteSeries2020a}.

%The ideas and most of the proofs for the Hermite estimators can be found in \cite{stephanouSequentialQuantilesHermite2017} and STEPHANOU2020 %\cite{stephanouPropertiesHermiteSeries2020} for the estimator on the real half line and on the real line respectively.

More information on the different estimators can be found in the cited literature and  in \cite{hanebeckNonparametricDistributionFunction2020}. In the comparison in \autoref{Section:Comparison}, many properties of the estimators are listed.

%\subsection{Why the proposed Szasz estimator is useful}
In the case where a random variable $Y$ is supported on the compact interval $[a,b], a<b$, it can easily be restricted to $[0,1]$ by transforming $Y$ to $(Y-a)/(b-a)$. The back-transformation can be done without worrying about optimality or convergence rates.

However, in most cases, it is not enough to consider distributions on $[0,1]$. If the support of a random variable $Z$ is $(-\infty, \infty)$ or $[0,\infty)$, possible transformations to $(0,1)$ are $1/2+(1/\pi)\tan^{-1}Z$ and $Z/(1+Z)$, respectively.
%In the case that $Z$ is supported on $[0,\infty)$, a possible approach is $Z/(1+Z)$, which leads to concentration on $(0,1)$.
Although the resulting random variable is supported on $(0,1)$, it is not clear what happens to optimality conditions and convergence rates after the back-transformation.

Another argument against nonlinear transformations is the loss of interpretability. Consider two random variables $Z_1$ and $Z_2$ on $[0,\infty)$, and the transformed quantities $Y_1=Z_1/(1+Z_1)$ and $Y_2=Z_2/(1+Z_2)$. If $Y_1$ is smaller than $Y_2$ in the (usual) stochastical order, it is not directly apparent if this also holds for $Z_1$ and $Z_2$.
Hence, such transformations have to be treated with care.

In this paper, we consider the Szasz estimator, as an alternative estimator of the distribution function on $[0,\infty)$.
%This is an important interval as in many applications, the data can only be positive but does not have an upper bound.
The kernel estimator can also estimate functions on $[0,\infty)$ but is not specifically designed for this interval. To get satisfactory results, special boundary corrections in the point zero are necessary (see \cite{zhangEstimatingDistributionFunction2020}), which is not the case for the Szasz estimator.
The Hermite estimator on the real half line is designed for $[0, \infty)$, but theoretical results and simulations later show that the Szasz estimator performs better on the positive real line.

%\subsection{What is shown in this paper} 
The paper is organized as follows.
In \autoref{Section:IdeaSzasz}, the approach and most important properties of the proposed estimator are explained. Then, in \autoref{Section:Derivations}, we derive asymptotic properties of the estimator. In \autoref{Section:Comparison}, the properties are compared with other estimators in a theoretical comparison, and then in a simulation study in \autoref{Section:Simulation}.
\autoref{Section:Conclusions} concludes the paper. Most proofs are postponed to the Appendix.

Throughout the paper, the notation $f=o(g)$ means that $\lim|f/g|=0$ as $m,n \rightarrow \infty$. A subscript (for example $f=o_x(g)$) indicates which parameters the convergence rate can depend on. Furthermore, the notation $f=O(g)$ means that $\limsup|f/g|<C$ for $m,n \rightarrow \infty$ and some $C \in (0,\infty)$. A subscript in this case means that $C$ could depend on the corresponding parameter.

\section{The Szasz Distribution Function Estimator} %Idea of the estimator
\label{Section:IdeaSzasz}
The idea of the estimator presented in this paper is similar to the Bernstein approach. The main difference is that instead of the Bernstein basis polynomials, we use Poisson probabilities. Hence, in the former case, we consider $\supp(f)=[0,1]$, while the latter case assumes $\supp(f)=[0,\infty)$. We make use of the following theorem that can be found in \cite{szaszGeneralizationBernsteinPolynomials1950}.
\begin{Theorem}
    \label{Theorem:Szasz}
    If $u$ is a continuous function on $(0,\infty)$ with a finite limit at infinity, then, as $m \rightarrow \infty$,
    \begin{equation*}
        S_m(u;x)=\sum_{k=0}^{\infty}u\left(\frac{k}{m}\right)V_{k,m}(x) \rightarrow u(x)
    \end{equation*}
    uniformly for $x \in (0,\infty)$, where $V_{k,m}(x)=e^{-mx}\frac{(mx)^k}{k!}$ for $k,m\in\N$.
\end{Theorem}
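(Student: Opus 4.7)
The natural approach is the probabilistic one: recognize $V_{k,m}(x) = e^{-mx}(mx)^k/k!$ as the probability mass function of a $\Poi(mx)$ random variable $K_m$, so that
\begin{equation*}
S_m(u;x) = \E\bigl[u(K_m/m)\bigr].
\end{equation*}
Since $\E[K_m/m] = x$ and $\Var(K_m/m) = x/m$, one expects $K_m/m$ to concentrate at $x$, and hence $\E[u(K_m/m)] \to u(x)$ by continuity of $u$. The classical Bernstein-style proof implements this via a split into a ``near'' region where continuity controls the integrand and a ``far'' region whose measure is bounded by Chebyshev.

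The plan is the following. First, extend $u$ to $\tilde u\colon [0,\infty]\to\R$ by setting $\tilde u(\infty)=\lim_{t\to\infty}u(t)$; because $[0,\infty]$ is compact, $\tilde u$ is uniformly continuous with respect to any metric generating its topology. Choose the metric $d(s,t)=|\phi(s)-\phi(t)|$ induced by the homeomorphism $\phi(s)=s/(1+s)$ (with $\phi(\infty)=1$). Given $\varepsilon>0$, pick $\delta>0$ so that $d(s,t)<\delta$ implies $|\tilde u(s)-\tilde u(t)|<\varepsilon$, and write
\begin{equation*}
|S_m(u;x)-u(x)| \leq \E\bigl|u(K_m/m)-u(x)\bigr| \leq \varepsilon + 2\|\tilde u\|_\infty\,\P\bigl(d(K_m/m,x)\geq\delta\bigr).
\end{equation*}
It remains to bound the tail probability uniformly in $x\in(0,\infty)$.

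The key computation is that $\phi$ is globally Lipschitz in a weighted sense:
\begin{equation*}
|\phi(K_m/m)-\phi(x)| = \frac{|K_m/m-x|}{(1+K_m/m)(1+x)} \leq \frac{|K_m/m-x|}{1+x}.
\end{equation*}
Squaring and taking expectation yields
\begin{equation*}
\E\bigl[d(K_m/m,x)^2\bigr] \leq \frac{\Var(K_m/m)}{(1+x)^2} = \frac{x}{m(1+x)^2} \leq \frac{1}{4m},
\end{equation*}
where the last inequality uses $x/(1+x)^2\leq 1/4$. Applying Markov's inequality gives $\P(d(K_m/m,x)\geq\delta)\leq 1/(4m\delta^2)$, a bound free of $x$. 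Plugging back and letting $m\to\infty$, then $\varepsilon\to 0$, delivers the uniform convergence.

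The main obstacle is precisely obtaining uniformity in $x\in(0,\infty)$. A direct Chebyshev bound on $|K_m/m-x|$ yields $x/(m\delta^2)$, which blows up as $x\to\infty$; the straightforward Bernstein argument on a compact interval does not carry over. The remedy is the compactification trick above: since $u$ becomes uniformly continuous only after adjoining the point at infinity, one must measure deviations in a metric that also ``tames'' that point, and the weight $(1+x)^{-1}$ from $\phi$ is exactly what cancels the growth of $\Var(K_m/m)=x/m$. Everything else is standard approximation-theoretic bookkeeping.
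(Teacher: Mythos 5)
The paper itself gives no proof of this theorem: it is quoted as a classical result of Sz\'asz (1950), so there is nothing in the text to compare your argument against line by line. Your probabilistic proof is correct and self-contained, and it is a genuinely different (and arguably cleaner) route than Sz\'asz's original analytic argument. The two things that make it work are exactly the right ones: (i) writing $S_m(u;x)=\E[u(K_m/m)]$ with $K_m\sim\Poi(mx)$ and splitting into a near/far region, and (ii) noticing that the naive Chebyshev bound $x/(m\delta^2)$ is useless as $x\to\infty$, whereas measuring distances through $\phi(s)=s/(1+s)$ produces the weight $(1+x)^{-2}$ that tames $\Var(K_m/m)=x/m$ into the uniform bound $x/(m(1+x)^2)\le 1/(4m)$. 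All the individual computations check out ($|\phi(a)-\phi(b)|=|a-b|/((1+a)(1+b))$, the maximum of $x/(1+x)^2$ at $x=1$, Markov applied to $d^2$, boundedness of $\tilde u$ from compactness). One caveat you should make explicit: your compactification step extends $u$ continuously to $[0,\infty]$, which requires $u$ to have a finite limit at $0^+$ with $u(0)$ equal to it --- the theorem's literal hypothesis (continuity on the open interval $(0,\infty)$) does not grant this, yet the $k=0$ term $u(0)e^{-mx}$ forces some such assumption for the conclusion to hold at all (take $u$ with a jump at $0$ and $x=1/m^2$ to see uniform convergence fail). This is really an imprecision in the statement as quoted rather than a gap in your argument, and the paper's own follow-up remark about extending to $u$ continuous on $[0,\infty)$ with $u(0)=0$ shows the authors are aware the endpoint needs separate treatment; just state the hypothesis you actually use.
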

The operator $S_m(u;x)$ is called the Szasz-Mirakyan operator of the function $u$ at the point $x$.
One can expand \autoref{Theorem:Szasz} to a function $u$ being continuous on $[0,\infty)$ with $u(0)=0$. Then, $S_m(u;0)=0$ and with the continuity it holds that $S_m(u;x) \rightarrow u(x)$ uniformly for $x \in [0,\infty)$.
In particular, a continuous distribution function $F$ on $[0,\infty)$ can be represented by
\begin{equation}
\label{Eq:SmF}
S_m(F;x)=\sum_{k=0}^{\infty}F\left(\frac{k}{m}\right)V_{k,m}(x),
\end{equation}
which converges to $F$ uniformly for $x \in [0,\infty)$.
%This follows from $F(0)=0$ and the remark after the theorem.
Then, a possible estimator of %the distribution function
$F$ on $[0,\infty)$ is
\begin{equation*}
\hat{F}_{m,n}^S(x)
%=\sum_{k=0}^{\infty}F_n\left(\frac{k}{m}\right)e^{-mx}\frac{(mx)^k}{k!}
=\sum_{k=0}^{\infty}F_n\left(\frac{k}{m}\right)V_{k,m}(x),
\end{equation*}
replacing the unknown distribution function $F$ in the Szasz-Mirakyan operator \autoref{Eq:SmF} by the EDF $F_n$.
%We define $V_{k,m}(x)=e^{-mx}\frac{(mx)^k}{k!}$ and
We call $\hat{F}_{m,n}^S$ the Szasz estimator. %We assume that $m=m_n$ depends on $n$.
The sum is infinite but can be written as a finite sum as shown in the next subsection.

In the remainder of this paper, we make the following general assumption:

\begin{Assumption}
  \label{Assumption:3}
  The distribution function $F$ is continuous. The first and second derivatives $f$ and $f'$ of $F$ are continuous and bounded on $[0,\infty)$.
\end{Assumption}
Note that if only the convergence itself is important and we are not interested in deriving the convergence rate, it is enough to assume these properties on $(0,\infty)$.

\subsection{Basic Properties of the Szasz Estimator}

%Here, some important properties of the Szasz estimator $\hat{F}_{m,n}^S(x)$ are shown.
The behavior of the Szasz estimator $\hat{F}_{m,n}^S(x)$ at $x=0$ and for $x \rightarrow \infty$ is appropriate, since we get
%good as can be seen now. We know that
\begin{align}
  \label{Eq:BoundaryPoisson}
  \hat{F}_{m,n}^S(0)&=0=F(0)=S_m(F;0), \nonumber\\
  \lim_{x \rightarrow \infty} \hat{F}_{m,n}^S(x)&=1=\lim_{x \rightarrow \infty} F(x)=\lim_{x \rightarrow \infty} S_m(F;x)
\end{align}
with probability one for all $m$. This means that bias and variance at the point $x=0$ are zero.
%To show that the limit is one, the following functions are needed.

In the sequel, we use the gamma function
$\Gamma(z)=\int_0^{\infty}x^{z-1}e^{-x}dx$,
as well as the upper and lower incomplete gamma functions, defined by
\begin{align*}
  \Gamma(z,s)=\int_s^{\infty}x^{z-1}e^{-x}dx, \enspace \text{and} \enspace
  \gamma(z,s)=\int_0^{s}x^{z-1}e^{-x}dx,
\end{align*}
respectively.
The limit on the left side of \autoref{Eq:BoundaryPoisson} is one since
\begin{align*}
  \hat{F}_{m,n}^S(x)&=\sum_{k=0}^{\infty}F_n\left(\frac{k}{m}\right)V_{k,m}(x)=\frac{1}{n}\sum_{i=1}^n\sum_{k=0}^{\infty}\mathbb{I}\{k\geq mX_i\}V_{k,m}(x)\nonumber\\
  &=\frac{1}{n}\sum_{i=1}^n\sum_{k=\lceil mX_i\rceil}^{\infty}V_{k,m}(x)=\frac{1}{n}\sum_{i=1}^n\P(Y \geq \lceil mX_i\rceil)\\
  &=\frac{1}{n}\sum_{i=1}^n\frac{\gamma(\lceil mX_i\rceil,mx)}{\Gamma(\lceil mX_i\rceil)}\nonumber\xrightarrow{x \rightarrow \infty} 1,
\end{align*}
where the random variable $Y$ has a Poisson distribution with expected value $mx$  ($Y \sim \Poi(mx)$ for short). Since the above representation only contains a finite number of summands, it can be used to easily simulate the estimator.
%Now, it is also possible to illustrate the estimator, see \autoref{Szasz}.
%\DoubleFigureVeranschaulichung{Szasz}{Illustration of the Szasz estimator for $n=20$.}

The expectation of the Szasz operator is of course given by the expression $\E[\hat{F}_{m,n}^S(x)]=S_m(F;x)$ for $x\in[0,\infty)$.

It is worth noting that $\hat{F}_{m,n}^S(x)$ yields a proper continuous distribution function with probability one and for all values of $m$. The continuity of $\hat{F}_{m,n}^S(x)$ is obvious. Moreover, it follows from \autoref{Eq:BoundaryPoisson} and the next theorem that $0\leq\hat{F}_{m,n}^S(x)\leq~1$ for $x\in [0,\infty)$.
\begin{Theorem}
  The function $\hat{F}_{m,n}^S(x)$ is increasing in $x$ on $[0,\infty)$.
\end{Theorem}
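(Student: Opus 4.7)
The plan is to differentiate $\hat{F}_{m,n}^S$ termwise in $x$ and show the resulting expression is a non-negative combination of non-negative functions. The key input is the simple differentiation rule for Poisson probabilities.

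First, I would compute $\frac{d}{dx}V_{k,m}(x)$ directly from $V_{k,m}(x)=e^{-mx}(mx)^k/k!$. A short calculation gives
\begin{equation*}
  V_{k,m}'(x) \;=\; m\bigl(V_{k-1,m}(x) - V_{k,m}(x)\bigr) \quad (k\geq 1), \qquad V_{0,m}'(x) \;=\; -mV_{0,m}(x),
\end{equation*}
so the same formula holds uniformly for all $k\geq 0$ with the convention $V_{-1,m}\equiv 0$. Before differentiating termwise, I would note that $F_n(k/m)$ stabilizes at $1$ as soon as $k/m > \max_i X_i$, so the sum in the definition of $\hat{F}_{m,n}^S(x)$ is, after re-expression as a telescoping difference, effectively a finite sum, and no delicate exchange of limit and derivative is needed.

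Then, termwise differentiation yields
\begin{equation*}
  \frac{d}{dx}\hat{F}_{m,n}^S(x) \;=\; m\sum_{k=0}^{\infty} F_n\!\left(\tfrac{k}{m}\right)\bigl(V_{k-1,m}(x) - V_{k,m}(x)\bigr).
\end{equation*}
An index shift in the first piece (an Abel-type summation by parts, which is justified since $F_n(k/m)\to 1$ and $V_{k,m}(x)\to 0$ as $k\to\infty$) gives
\begin{equation*}
  \frac{d}{dx}\hat{F}_{m,n}^S(x) \;=\; m\sum_{k=0}^{\infty}\Bigl[F_n\!\left(\tfrac{k+1}{m}\right) - F_n\!\left(\tfrac{k}{m}\right)\Bigr]\,V_{k,m}(x).
\end{equation*}
Since $F_n$ is non-decreasing, each bracket is non-negative, and $V_{k,m}(x)\geq 0$ by construction, so the whole derivative is non-negative on $[0,\infty)$. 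This yields the monotonicity claim.

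As an alternative to double-check, one could instead start from the representation $\hat{F}_{m,n}^S(x) = \frac{1}{n}\sum_{i=1}^n \P(Y \geq \lceil mX_i\rceil)$ with $Y \sim \Poi(mx)$ derived just before the statement. Differentiating a single Poisson tail probability in the parameter gives a telescoping cancellation leaving a single non-negative term $mV_{\lceil mX_i\rceil - 1,m}(x)$, which recovers the same conclusion. There is no real obstacle here; the only point requiring any care is the rearrangement of the infinite series, and that is trivial because $F_n$ is eventually constant.
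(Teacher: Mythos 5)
Your proof is correct and is essentially the paper's argument in differentiated form: the paper performs the same summation by parts at the level of the functions themselves, writing $\hat{F}_{m,n}^S(x)=\sum_{k}\bigl[F_n(k/m)-F_n((k-1)/m)\bigr]U_k(m,x)$ with $U_k(m,x)=\gamma(k,mx)/\Gamma(k)$ increasing in $x$, whereas you differentiate termwise and shift the index so as to land on the same non-negative increments of $F_n$. Both routes rest on the identical rearrangement and on $F_n$ being non-decreasing; your justification that the series is effectively finite is sound, and the derivative formula you arrive at appears verbatim later in the paper.
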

\begin{proof}
  This proof is similar to the proof for the Bernstein estimator that can be found in \cite{babuApplicationBernsteinPolynomials2002}. Let
  \begin{equation*}
    g_n(0)=0, \enspace g_n\left(\frac{k}{m}\right)=F_n\left(\frac{k}{m}\right)-F_n\left(\frac{k-1}{m}\right), k=1,2,...,
  \end{equation*}
  and
  \begin{equation*}
    U_k(m,x)=\sum_{j=k}^{\infty}V_{j,m}(x)
    =\frac{1}{\Gamma(k)}\int_0^{mx}t^{k-1}e^{-t}\diff t.
  \end{equation*}
  The last equation holds because
  \begin{equation*}
    U_k(m,x)=1-\sum_{j=0}^{k-1}V_{j,m}(x)=1-\frac{\Gamma(k,mx)}{\Gamma(k)}=\frac{\gamma(k,mx)}{\Gamma(k)}.
  \end{equation*}
  It follows that $\hat{F}_{m,n}^S$ can be written as
  \begin{equation*}
    \hat{F}_{m,n}^S(x)=\sum_{k=0}^{\infty}g_n\left(\frac{k}{m}\right)U_k(m,x)
  \end{equation*}
  because
  \begin{align*}
    \sum_{k=0}^{\infty}g_n\left(\frac{k}{m}\right)U_k(m,x)&=\sum_{k=1}^{\infty}\left[F_n\left(\frac{k}{m}\right)-F_n\left(\frac{k-1}{m}\right)\right]\sum_{j=k}^{\infty}V_{j,m}(x)\nonumber\\
    &=\sum_{k=1}^{\infty}\sum_{j=k}^{\infty}F_n\left(\frac{k}{m}\right)V_{j,m}(x)-\sum_{k=0}^{\infty}\sum_{j=k}^{\infty}F_n\left(\frac{k}{m}\right)V_{j,m}(x)\nonumber\\
    &\enspace +\sum_{k=0}^{\infty}F_n\left(\frac{k}{m}\right)V_{k,m}(x)=\hat{F}_{m,n}^S(x).
  \end{align*}
  The claim follows as $g_n\left(\frac{k}{m}\right)$ is non-negative for at least one $k$ and $U_k(m,x)$ is increasing.
\end{proof}
The next theorem shows that $\hat{F}_{m,n}^S(x)$ is uniformly strongly consistent.
\begin{Theorem}
  If $F$ is a continuous probability distribution function on $[0,\infty)$, then
  \begin{equation*}
    \left\|\hat{F}_{m,n}^S-F\right\|\rightarrow 0 \enspace \text{a.s.}
  \end{equation*}
  for $m,n \rightarrow \infty$. We use the notation $\|G\|=\displaystyle\sup_{x\in[0,\infty)}|G(x)|$ for a bounded function $G$ on $[0,\infty)$.
\end{Theorem}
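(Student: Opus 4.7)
The plan is to use the triangle inequality to split the problem into a deterministic approximation error and a stochastic estimation error:
\begin{equation*}
\left\|\hat{F}_{m,n}^S - F\right\| \leq \left\|\hat{F}_{m,n}^S - S_m(F;\cdot)\right\| + \left\|S_m(F;\cdot) - F\right\|.
\end{equation*}
Each of the two terms on the right-hand side corresponds to a different asymptotic regime: the first vanishes as $n \to \infty$ (using the sample), the second vanishes as $m \to \infty$ (the Szasz operator approximation).

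For the second term, I would invoke the extension of \autoref{Theorem:Szasz} mentioned right after its statement. Since $F$ is continuous on $[0,\infty)$ with $F(0)=0$ and $\lim_{x\to\infty}F(x)=1$ (a finite limit at infinity), Szasz's theorem applies and yields $\|S_m(F;\cdot) - F\| \to 0$ as $m \to \infty$. No probability is involved in this step.

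For the first term, the key observation is that the Szasz estimator depends \emph{linearly} on the empirical distribution function evaluated at the grid points $k/m$. Writing
\begin{equation*}
\hat{F}_{m,n}^S(x) - S_m(F;x) = \sum_{k=0}^{\infty} \left[F_n\!\left(\tfrac{k}{m}\right) - F\!\left(\tfrac{k}{m}\right)\right] V_{k,m}(x),
\end{equation*}
and using that $V_{k,m}(x) \geq 0$ with $\sum_{k=0}^\infty V_{k,m}(x) = 1$ (they are Poisson probabilities), I obtain the pointwise bound
\begin{equation*}
\left|\hat{F}_{m,n}^S(x) - S_m(F;x)\right| \leq \sup_{y \geq 0}\left|F_n(y) - F(y)\right| = \|F_n - F\|,
\end{equation*}
uniformly in $x$ and $m$. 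By the classical Glivenko–Cantelli theorem, $\|F_n - F\| \to 0$ almost surely as $n \to \infty$.

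Combining both bounds, as $m,n \to \infty$, the right-hand side of the triangle inequality tends to zero almost surely, which gives the claim. There is essentially no main obstacle here; the proof is a clean reduction to two results already available, namely the uniform convergence of the Szasz operator (the extended version of \autoref{Theorem:Szasz}) and Glivenko–Cantelli. The only subtlety worth a line of comment is verifying that the extended Szasz theorem indeed applies to distribution functions $F$ on $[0,\infty)$ satisfying \autoref{Assumption:3}, which is immediate since such $F$ is continuous with $F(0)=0$ and a finite limit at infinity.
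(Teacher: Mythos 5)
Your proof is correct and follows essentially the same route as the paper's: the triangle inequality splitting into $\|\hat{F}_{m,n}^S-S_m\|+\|S_m-F\|$, bounding the first term by $\|F_n-F\|$ via $\sum_k V_{k,m}=1$ and applying Glivenko--Cantelli, and handling the second term with the (extended) Szasz approximation theorem. Your remark on verifying $F(0)=0$ for the extension is a welcome extra detail the paper leaves implicit.
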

\begin{proof}
  The proof follows the proof of Theorem 2.1 in  \cite{babuApplicationBernsteinPolynomials2002}.
  It holds that
  \begin{equation*}
    \left\|\hat{F}_{m,n}^S-F\right\| \leq \left\|\hat{F}_{m,n}^S-S_m\right\|+\|S_m-F\|
  \end{equation*}
  and
  \begin{align*}
    \left\|\hat{F}_{m,n}^S-S_m\right\|&=\|\sum_{k=0}^{\infty} \left[F_n(k/m)-F(k/m)\right]V_{k,m}\|\nonumber\\
    &\leq \|F_n-F\|\cdot\|\sum_{k=0}^{\infty} V_{k,m}\|=\|F_n-F\|.
  \end{align*}
Since $\|F_n-F\|\rightarrow 0$ a.s. for $n \rightarrow \infty$ by the Glivenko-Cantelli theorem, the claim follows with \autoref{Theorem:Szasz}.
\end{proof}

\section{Asymptotic Properties of the Szasz estimator}
\label{Section:Derivations}

\subsection{Bias and Variance}

We now calculate the bias and the variance of the Szasz estimator $\hat{F}_{m,n}^S$ on the inner interval $(0,\infty)$, as we already know that bias and variance are zero for $x=0$. In the following lemma, we first find a different expression of $S_m$ that is similar to a result in \cite{lorentzBernsteinPolynomials1986}.

\begin{Lemma}
  \label{Lemma:LorentzS}
    We have, for $x \in (0,\infty)$ that
    \begin{align*}
      S_m(F;x)=F(x)+m^{-1}b^S(x)+o_x(m^{-1}),
    \end{align*}
    where $b^S(x)=\frac{xf'(x)}{2}$.
 \end{Lemma}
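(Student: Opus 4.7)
The plan is to recognize that $S_m(F;x)=\E[F(Y/m)]$ where $Y\sim\Poi(mx)$, and then to expand $F$ around $x$ using the fact that $F$ has two continuous bounded derivatives (\autoref{Assumption:3}). Writing
\begin{equation*}
F(k/m)=F(x)+f(x)\bigl(k/m-x\bigr)+\int_x^{k/m}(k/m-t)f'(t)\,\diff t,
\end{equation*}
and inserting this into $\sum_{k=0}^\infty F(k/m)V_{k,m}(x)$, the first term contributes $F(x)$, and the linear term vanishes since $\E[Y/m-x]=0$. So the whole question reduces to analyzing $R_m(x):=\E\bigl[\int_x^{Y/m}(Y/m-t)f'(t)\,\diff t\bigr]$.

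The next step is to split $f'$ in the integral as $f'(t)=f'(x)+\bigl(f'(t)-f'(x)\bigr)$. The leading piece yields
\begin{equation*}
\tfrac{f'(x)}{2}\E\bigl[(Y/m-x)^2\bigr]=\tfrac{f'(x)}{2}\cdot\tfrac{x}{m}=\tfrac{xf'(x)}{2m},
\end{equation*}
using $\Var(Y/m)=x/m$, which matches the claimed bias $m^{-1}b^S(x)$. The remainder to control is
\begin{equation*}
\rho_m(x):=\E\Bigl[\int_x^{Y/m}(Y/m-t)\bigl(f'(t)-f'(x)\bigr)\diff t\Bigr],
\end{equation*}
which we want to show is $o_x(m^{-1})$.

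The main obstacle is that the Taylor remainder estimate $f'(t)-f'(x)=o(1)$ is only a local statement in $t$, whereas the Poisson variable $Y/m$ ranges over $[0,\infty)$. I would handle this by a standard near/far decomposition: given $\varepsilon>0$, use continuity of $f'$ to pick $\delta>0$ with $|f'(t)-f'(x)|<\varepsilon$ for $|t-x|<\delta$, giving a near contribution bounded by $\tfrac{\varepsilon}{2}\E[(Y/m-x)^2]=\tfrac{\varepsilon x}{2m}$. On the far event $\{|Y/m-x|\ge\delta\}$, boundedness of $f'$ (say $|f'|\le M$) yields a contribution bounded by $M\,\E\bigl[(Y/m-x)^2\,\mathbb{I}\{|Y/m-x|\ge\delta\}\bigr]$. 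By Cauchy--Schwarz together with the Poisson moment estimates $\E[(Y/m-x)^4]=3x^2/m^2+x/m^3=O_x(m^{-2})$ and $\P(|Y/m-x|\ge\delta)\le x/(m\delta^2)$ by Chebyshev, the far contribution is $O_x(m^{-3/2})=o_x(m^{-1})$. Letting $\varepsilon\to 0$ after multiplying by $m$ gives $\rho_m(x)=o_x(m^{-1})$.

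Combining all three contributions yields $S_m(F;x)=F(x)+\tfrac{xf'(x)}{2m}+o_x(m^{-1})$, as claimed. The only real subtlety is that the argument depends on $x$ through the choice of $\delta$ (which is fine, since the claim is $o_x$, not uniform in $x$) and through the Poisson moments, which also depend on $x$ but in a controlled polynomial way.
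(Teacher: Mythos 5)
Your proof is correct and follows essentially the same route as the paper: a second-order Taylor expansion of $F$ about $x$, with the linear term killed by $\E[Y/m]=x$ for $Y\sim\Poi(mx)$ and the quadratic term producing $\tfrac{f'(x)}{2}\cdot\tfrac{x}{m}$ via the Poisson variance. The only difference is that where the paper simply asserts $\sum_k o\bigl((k/m-x)^2\bigr)V_{k,m}(x)=o\bigl(\sum_k(k/m-x)^2V_{k,m}(x)\bigr)$, you justify that interchange in full via the integral-form remainder and a near/far split using continuity of $f'$, Chebyshev, and the fourth Poisson central moment --- a worthwhile elaboration, since that step is precisely where the boundedness of $f'$ from \autoref{Assumption:3} is actually needed.
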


\begin{proof}
  Following the proof in \citet[Section 1.6.1]{lorentzBernsteinPolynomials1986}, Taylor's theorem gives
  \begin{align*}
    S_m(F;x)
    &=\sum_{k=0}^{\infty} F\left(\frac{k}{m}\right)V_{k,m}(x)=F(x)+ \sum_{k=0}^{\infty} \left(\frac{k}{m}-x\right) f(x) V_{k,m}(x) \\
    &\; + \frac{1}{2} f'(x)\sum_{k=0}^{\infty} \left(\frac{k}{m}-x\right)^2 V_{k,m}(x) + \sum_{k=0}^{\infty} o\left(\left(\frac{k}{m}-x\right)^2\right) V_{k,m}(x).
  \end{align*}
  The second summand, say, $S_2$, simplifies to $S_2=xf(x)-xf(x)=0$, because for $x\in[0,\infty)$ it holds that
  \begin{equation*}
    \sum_{k=0}^{\infty} \frac{k}{m}V_{k,m}(x)=\frac{1}{m}\E[Y]=x,
  \end{equation*}
  where $Y \sim \Poi(mx)$.
  The third term can be written as
  \begin{equation}
    \label{Eq:QuadratS}
    \sum_{k=0}^{\infty} \left(\frac{k}{m}-x\right)^2 V_{k,m}(x)=\frac{1}{m^2}\Var[Y]=\frac{x}{m}.
  \end{equation}
  For the last summand we know that
  \begin{align*}
    & \sum_{k=0}^{\infty} o\left(\left(\frac{k}{m}-x\right)^2\right) V_{k,m}(x) = o\left(\sum_{k=0}^{\infty} \left(\frac{k}{m}-x\right)^2 V_{k,m}(x)\right)  = o\left(\frac{x}{m}\right)=o_x(m^{-1})
  \end{align*}
  with \autoref{Eq:QuadratS}.
\end{proof}

The following theorem establishes asymptotic expressions for the bias and the variance of the Szasz estimator $\hat{F}_{m,n}^S$ as $m,n \rightarrow \infty$ are established. The statement is similar to Theorem 1 in \cite{leblancEstimatingDistributionFunctions2012}.
\begin{Theorem}
  \label{Theorem:BiasVarS}
  For each $x\in(0,\infty)$, the bias has the representation
  \begin{align*}
    \Bias\left[\hat{F}_{m,n}^S(x)\right]&=\E\left[\hat{F}_{m,n}\right] - F(x)=m^{-1}\frac{xf'(x)}{2}+o_x(m^{-1})\nonumber\\
    &=m^{-1}b^S(x)+o_x(m^{-1}).
  \end{align*}
  For the variance it holds that
  \begin{align*}
    \Var\left[\hat{F}_{m,n}^S(x)\right]=n^{-1}\sigma^2(x)-m^{-1/2}n^{-1}V^S(x)+o_x(m^{-1/2}n^{-1}),
  \end{align*}
  where
  \begin{equation*}
    \sigma^2(x)=F(x)(1-F(x)) \text{,} \enspace V^S(x)=f(x)\left[\frac{x}{\pi}\right]^{1/2}
  \end{equation*}
  and $b^S(x)$ is defined in \autoref{Lemma:LorentzS}.
\end{Theorem}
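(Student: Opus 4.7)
The bias is essentially immediate: by linearity of expectation, $\E[F_n(k/m)] = F(k/m)$, so $\E[\hat{F}_{m,n}^S(x)] = S_m(F;x)$, and then \autoref{Lemma:LorentzS} gives the claimed expansion $m^{-1}b^S(x) + o_x(m^{-1})$ directly. So the work is all in the variance.

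For the variance, the plan is to exploit the representation derived just before the statement, namely
\begin{equation*}
    \hat{F}_{m,n}^S(x) = \frac{1}{n}\sum_{i=1}^n W_m(X_i;x), \qquad W_m(t;x) := \sum_{k=0}^{\infty}\mathbb{I}\{k\geq mt\}\, V_{k,m}(x),
\end{equation*}
so that $\Var[\hat{F}_{m,n}^S(x)] = n^{-1}\bigl(\E[W_m(X_1;x)^2] - S_m(F;x)^2\bigr)$. Squaring the indicator-sum defining $W_m(X_1;x)$ and taking expectation over $X_1$ gives
\begin{equation*}
    \E[W_m(X_1;x)^2] = \sum_{k,\ell = 0}^{\infty} F\bigl(\min(k/m,\ell/m)\bigr)\, V_{k,m}(x)V_{\ell,m}(x) = \E\bigl[F\bigl(\min(Y_1/m,Y_2/m)\bigr)\bigr],
\end{equation*}
where $Y_1,Y_2$ are independent $\Poi(mx)$ random variables. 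I would then Taylor expand $F$ around $x$, keeping the first-order term and bounding the remainder by the second moment, and combine with $S_m(F;x)^2 = F(x)^2 + O_x(m^{-1})$ (which follows from \autoref{Lemma:LorentzS}) to identify the leading correction.

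The key computation is the asymptotics of $\E[\min(Y_1/m,Y_2/m) - x]$. Using the identity $\min(a,b) = (a+b-|a-b|)/2$, this equals $-\tfrac{1}{2m}\E|Y_1-Y_2|$. Standardizing, $(Y_1-Y_2)/\sqrt{2mx}$ has mean zero, variance one, and converges in distribution to a standard normal $Z$; combined with uniform integrability (immediate from the bounded second moment) this gives $\E|Y_1-Y_2| = \sqrt{2mx}\,\E|Z| + o(\sqrt{m}) = 2\sqrt{mx/\pi} + o(\sqrt{m})$. Hence $\E[\min(Y_1/m,Y_2/m) - x] = -\sqrt{x/(\pi m)} + o_x(m^{-1/2})$, while the quadratic remainder is controlled by $\E[(\min(Y_i/m,Y_j/m)-x)^2] \leq 2\E[(Y_1/m-x)^2] = 2x/m = O_x(m^{-1})$. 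Putting everything together yields
\begin{equation*}
    \Var[\hat{F}_{m,n}^S(x)] = \frac{1}{n}\Bigl[F(x) - f(x)\sqrt{x/(\pi m)} - F(x)^2 + o_x(m^{-1/2})\Bigr] = n^{-1}\sigma^2(x) - m^{-1/2}n^{-1}V^S(x) + o_x(m^{-1/2}n^{-1}).
\end{equation*}

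The main obstacle is securing the precise leading-order asymptotic $\E|Y_1-Y_2| \sim 2\sqrt{mx/\pi}$ with enough control on the error to feed into the Taylor expansion; this is where the constant $1/\sqrt{\pi}$ in $V^S(x)$ enters, and the CLT plus uniform-integrability argument for the Skellam-type variable $Y_1-Y_2$ is the crux of the proof. Everything else (the bias identity, the Taylor expansion, and the bookkeeping of the higher-order terms) is mechanical once that is in hand.
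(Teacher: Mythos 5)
Your proof is correct, and although its skeleton coincides with the paper's (both reduce the variance to $n^{-1}\bigl[\sum_{k,l}F\bigl(\tfrac{k\wedge l}{m}\bigr)V_{k,m}(x)V_{l,m}(x)-S_m^2(F;x)\bigr]$ via the covariance of the indicators, and both then Taylor-expand $F$ about $x$), you evaluate the crucial first-order sum $\sum_{k,l}\bigl(\tfrac{k\wedge l}{m}-x\bigr)V_{k,m}(x)V_{l,m}(x)$ by a genuinely different and more elementary route. The paper isolates this quantity as $m^{-1/2}\tilde R^S_{1,m}(x)$ and proves $\tilde R^S_{1,m}(x)=-\sqrt{x/\pi}+o_x(1)$ in Lemma~\ref{Lemma:EigenschaftenLRS}(e) by splitting off the diagonal, invoking a local limit theorem for the Poisson weights (Theorem~\ref{Lemma:LimitTheorem}, due to Ouimet) and approximating the off-diagonal double sum by a Gaussian double integral --- precisely the step where an earlier version of the paper contained an error. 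You instead recognize the sum as $\E[\min(Y_1,Y_2)/m]-x$ for independent $Y_1,Y_2\sim\Poi(mx)$, use $\min(a,b)=(a+b-|a-b|)/2$ to reduce it to $-\E|Y_1-Y_2|/(2m)$, and obtain $\E|Y_1-Y_2|=2\sqrt{mx/\pi}+o(\sqrt{m})$ from the CLT for the standardized Skellam variable together with uniform integrability (supplied by the uniformly bounded second moment, $\E[((Y_1-Y_2)/\sqrt{2mx})^2]=1$). This sidesteps the local-limit-theorem machinery entirely and is arguably more robust; the trade-off is that your pointwise argument delivers only the leading constant with an $o_x(m^{-1/2})$ error at a fixed $x$, whereas the paper's Lemma~\ref{Lemma:EigenschaftenLRS} also furnishes the integrated versions (parts (f)--(h)) that are reused later for the MISE in Theorem~\ref{Theorem:MISES}, which your approach would only recover after an additional domination step. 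The remaining bookkeeping in your write-up is sound: the bias identity is immediate from Lemma~\ref{Lemma:LorentzS}, and the quadratic remainder bound $\E[(\min(Y_1,Y_2)/m-x)^2]\le 2x/m=O_x(m^{-1})$ follows from $(\min(a,b)-x)^2\le(a-x)^2+(b-x)^2$ together with the boundedness of $f'$ guaranteed by Assumption~\ref{Assumption:3}.
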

\noindent For the proof, see Section \nameref{Section:ProofsSzasz}.

%In the following, we talk about the asymptotic behavior of the Szasz estimator.

\subsection{Asymptotic Normality}

Here, we turn our attention to the asymptotic behavior of the Szasz estimator. The next theorem is similar to Theorem 2 in \cite{leblancEstimatingDistributionFunctions2012} and shows the asymptotic normality of this estimator.
\begin{Theorem}
  \label{Theorem:AsySzasz}
  Let $x \in (0,\infty)$, such that $0 < F(x) < 1$. Then, for $m,n \rightarrow \infty$ it holds that
  \begin{equation*}
    n^{1/2}\left(\hat{F}_{m,n}^S(x)-\mathbb{E}[\hat{F}_{m,n}^S(x)]\right) = n^{1/2}\left(\hat{F}_{m,n}^S(x)-S_m(F;x)\right) \xrightarrow{D} \mathcal{N}\left(0,\sigma^2(x)\right),
  \end{equation*}
  where $\sigma^2(x)=F(x)(1-F(x))$.
\end{Theorem}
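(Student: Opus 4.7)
The plan is to write the Szasz estimator as an average of i.i.d.\ summands and then invoke a central limit theorem for a triangular array, since the distribution of each summand depends on $m$, which varies with $n$. Most of the analytic work has already been carried out in \autoref{Theorem:BiasVarS}; what remains is essentially a CLT verification.

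Concretely, I would set
\begin{equation*}
  W_{i,m}(x)=\sum_{k=0}^{\infty}\mathbb{I}\{X_i\le k/m\}\,V_{k,m}(x),
\end{equation*}
so that $\hat{F}_{m,n}^S(x)=n^{-1}\sum_{i=1}^n W_{i,m}(x)$, and the variables $W_{1,m}(x),\ldots,W_{n,m}(x)$ are i.i.d.\ for fixed $m$ with common mean $\E[W_{1,m}(x)]=S_m(F;x)$ and common variance $s_m^2:=\Var[W_{1,m}(x)]=n\,\Var[\hat{F}_{m,n}^S(x)]$. The variance expansion in \autoref{Theorem:BiasVarS} immediately yields $s_m^2\to\sigma^2(x)=F(x)(1-F(x))$ as $m\to\infty$, and this limit is strictly positive by the hypothesis $0<F(x)<1$.

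With this setup, $n^{1/2}(\hat{F}_{m,n}^S(x)-S_m(F;x))=n^{-1/2}\sum_{i=1}^n\bigl(W_{i,m}(x)-S_m(F;x)\bigr)$ is the normalized sum of a triangular array, and I would verify Lyapunov's condition with $\delta=1$. The crucial observation is the uniform bound $0\le W_{i,m}(x)\le \sum_{k=0}^{\infty}V_{k,m}(x)=1$, which gives $|W_{i,m}(x)-S_m(F;x)|\le 1$ and in particular $\E|W_{1,m}(x)-S_m(F;x)|^3\le s_m^2$. The Lyapunov ratio then satisfies
\begin{equation*}
  \frac{n\,\E|W_{1,m}(x)-S_m(F;x)|^3}{(n s_m^2)^{3/2}}\le\frac{1}{n^{1/2}s_m}\longrightarrow 0,
\end{equation*}
since $s_m$ stays bounded away from zero for all sufficiently large $m$. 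The triangular-array CLT delivers $n^{1/2}(\hat{F}_{m,n}^S(x)-S_m(F;x))/s_m\xrightarrow{D}\mathcal{N}(0,1)$, and Slutsky's theorem together with $s_m\to\sigma(x)$ upgrades this to the claimed $n^{1/2}(\hat{F}_{m,n}^S(x)-S_m(F;x))\xrightarrow{D}\mathcal{N}(0,\sigma^2(x))$.

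I do not anticipate a genuine obstacle here; the only subtlety is that one centers at $S_m(F;x)$ rather than $F(x)$. Centering at $F(x)$ would additionally require controlling the bias $m^{-1}b^S(x)$ and therefore coupling the rates of $m$ and $n$, but the theorem as formulated avoids this entirely and reduces the problem to a clean Lyapunov verification, relying only on the trivial boundedness of $W_{i,m}(x)$ and the variance asymptotics already established.
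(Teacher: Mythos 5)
Your proposal is correct and follows essentially the same route as the paper: both write the estimator as an average of i.i.d.\ summands bounded by $1$ (your $W_{i,m}(x)-S_m(F;x)$ is exactly the paper's $Y_{i,m}^S$), invoke the triangular-array CLT, and use the variance expansion of \autoref{Theorem:BiasVarS} together with $0<F(x)<1$ to get $s_m^2\to\sigma^2(x)>0$. The only cosmetic difference is that you verify Lyapunov's condition with $\delta=1$ where the paper checks Lindeberg directly, and both verifications reduce to the same trivial boundedness observation.
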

The idea for the proof is to use the central limit theorem for double arrays, see Section \nameref{Section:ProofsSzasz} for more details.
Note that as in the settings before, this result holds for all choices of $m$ with $m \rightarrow \infty$ without any restrictions.

We now take a closer look at the asymptotic behavior of $\hat{F}_{m,n}^S(x)-F(x)$, where the behavior of $m$ is restricted. With \autoref{Lemma:LorentzS}, it is easy to see that
\begin{align}
  \label{Eq:AsyBernsteinS}
  n^{1/2}\left(\hat{F}_{m,n}^S(x)-F(x)\right)&=n^{1/2}\left(\hat{F}_{m,n}^S(x)-S_m(F;x)\right)\nonumber\\
  &\enspace+m^{-1}n^{1/2}b^S(x)+o_x(m^{-1}n^{1/2}).
\end{align}
This leads directly to the following corollary, which is similar to Corollary 2 in \cite{leblancEstimatingDistributionFunctions2012} but on $(0,\infty)$.

\begin{Corollary}
  Let $m,n \rightarrow \infty$. Then, for $x\in(0,\infty)$ with $0<F(x)<1$, it holds that
  \begin{enumerate}
    \item[(a)] if $mn^{-1/2} \rightarrow \infty$, then
    \begin{equation*}
      n^{1/2}\left(\hat{F}_{m,n}^S(x)-F(x)\right) \xrightarrow{D} \mathcal{N}\left(0,\sigma^2(x)\right),
    \end{equation*}
    \item[(b)] if $mn^{-1/2} \rightarrow c$, where $c$ is a positive constant, then
    \begin{equation*}
      n^{1/2}\left(\hat{F}_{m,n}^S(x)-F(x)\right) \xrightarrow{D} \mathcal{N}\left(c^{-1}b^S(x),\sigma^2(x)\right),
    \end{equation*}
  \end{enumerate}
  where $\sigma^2(x)$ and $b^S(x)$ are defined in \autoref{Theorem:BiasVarS}.
\end{Corollary}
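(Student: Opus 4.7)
My plan is to read off the corollary as a direct consequence of \autoref{Theorem:AsySzasz} combined with the decomposition in \autoref{Eq:AsyBernsteinS}, using Slutsky's theorem to split the stochastic and deterministic pieces. The decomposition
\begin{equation*}
  n^{1/2}\bigl(\hat{F}_{m,n}^S(x)-F(x)\bigr) = n^{1/2}\bigl(\hat{F}_{m,n}^S(x)-S_m(F;x)\bigr) + m^{-1}n^{1/2}b^S(x) + o_x\bigl(m^{-1}n^{1/2}\bigr)
\end{equation*}
already separates a centered random quantity with a known limiting law from a deterministic bias contribution of order $m^{-1}n^{1/2}$, so all that remains is to track how this latter order behaves under each rate assumption.

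For part (a), I would note that $mn^{-1/2}\to\infty$ is exactly equivalent to $m^{-1}n^{1/2}\to 0$, so both the bias term $m^{-1}n^{1/2}b^S(x)$ and the remainder $o_x(m^{-1}n^{1/2})$ tend to zero. Invoking \autoref{Theorem:AsySzasz} for the stochastic part together with Slutsky's theorem then yields the centered normal limit $\mathcal{N}(0,\sigma^2(x))$.

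For part (b), the assumption $mn^{-1/2}\to c$ with $c>0$ gives $m^{-1}n^{1/2}\to c^{-1}$, so the deterministic bias term converges to $c^{-1}b^S(x)$ while the remainder $o_x(m^{-1}n^{1/2})=o_x(1)$ still vanishes. Again \autoref{Theorem:AsySzasz} delivers the $\mathcal{N}(0,\sigma^2(x))$ limit for the centered part, and Slutsky's theorem (in the form of a constant shift) produces the limiting law $\mathcal{N}(c^{-1}b^S(x),\sigma^2(x))$.

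There is no real obstacle here beyond bookkeeping: the only point worth a moment's care is checking that the little-$o$ remainder is deterministic (or at least convergent to $0$ in probability) so that Slutsky applies; this is immediate since the remainder in \autoref{Lemma:LorentzS}, which is the source of the $o_x(m^{-1})$ term in the bias, is nonrandom, and the stochastic fluctuation has already been isolated in the first summand.
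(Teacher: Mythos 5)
Your proposal is correct and matches the paper's own reasoning: the corollary is presented there as a direct consequence of the decomposition in \autoref{Eq:AsyBernsteinS} together with \autoref{Theorem:AsySzasz}, with the bias term $m^{-1}n^{1/2}b^S(x)$ vanishing in case (a) and converging to $c^{-1}b^S(x)$ in case (b). Your explicit invocation of Slutsky's theorem and the remark that the remainder is deterministic simply spell out what the paper leaves implicit.
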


%Next, we derive the asymptotically optimal $m$ with respect to MSE.

\subsection{Asymptotically Optimal $m$ with Respect to Mean-squared Error}

For the estimator $\hat{F}_{m,n}^S$, it is interesting to calculate the  mean-squared error (MSE)
\begin{equation*}
  \MSE\left[\hat{F}_{m,n}^S(x)\right]=\E\left[\left(\hat{F}_{m,n}^S(x)-F(x)\right)^2\right]
\end{equation*}
and the asymptotically optimal $m$ with respect to MSE. The MSE at $x=0$ is zero. For $(0,\infty)$, the next theorem shows the asymptotic MSE.

\begin{Theorem}
  \label{Theorem:MSES}
  The MSE of the Szasz estimator is of the form
  \begin{align}
    \label{Eq:MSEFmnS}
    \MSE\left[\hat{F}_{m,n}^S(x)\right] &= \Var\left[\hat{F}_{m,n}^S(x)\right]+\Bias\left[\hat{F}_{m,n}^S(x)\right]^2\nonumber\\
    &= n^{-1}\sigma^2(x)-m^{-1/2}n^{-1}V^S(x)+m^{-2}\left(b^S(x)\right)^2\nonumber\\
    &\enspace+o_x(m^{-2})+o_x(m^{-1/2}n^{-1})
  \end{align}
  for $x\in(0,\infty)$.
\end{Theorem}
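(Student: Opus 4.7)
The statement is essentially a bookkeeping corollary of \autoref{Theorem:BiasVarS}, so my plan is to simply assemble the variance and the squared bias into the standard decomposition
\begin{equation*}
  \MSE\left[\hat{F}_{m,n}^S(x)\right] = \Var\left[\hat{F}_{m,n}^S(x)\right] + \Bias\left[\hat{F}_{m,n}^S(x)\right]^2,
\end{equation*}
which holds for any square-integrable estimator. Since \autoref{Theorem:BiasVarS} already gives the two summands on the right-hand side, nothing new has to be proved about the estimator itself; the only work is algebraic manipulation of the error terms.

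First, I would quote the variance expansion from \autoref{Theorem:BiasVarS}, contributing $n^{-1}\sigma^2(x) - m^{-1/2}n^{-1}V^S(x) + o_x(m^{-1/2}n^{-1})$ to the MSE. Next, I would square the bias expansion $m^{-1}b^S(x) + o_x(m^{-1})$. Expanding the square produces three terms: the leading $m^{-2}(b^S(x))^2$, a cross term $2 m^{-1} b^S(x)\cdot o_x(m^{-1}) = o_x(m^{-2})$, and the squared remainder $(o_x(m^{-1}))^2 = o_x(m^{-2})$. So the squared bias equals $m^{-2}(b^S(x))^2 + o_x(m^{-2})$.

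Adding the two pieces yields exactly the stated expression, with remainder $o_x(m^{-2}) + o_x(m^{-1/2}n^{-1})$. Since $b^S(x)$ and $V^S(x)$ depend only on $x$ (through $f$, $f'$, $F$), the $o_x$ notation is consistent with the definition given in the introduction. There is no genuine obstacle here; the only point requiring a moment of care is verifying that the cross term in the squared bias is absorbed into $o_x(m^{-2})$ rather than contributing a term of order $m^{-2}$, which follows immediately from $b^S(x)\cdot o_x(m^{-1}) = o_x(m^{-1})$ and hence $m^{-1}\cdot o_x(m^{-1}) = o_x(m^{-2})$.
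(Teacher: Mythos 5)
Your proposal is correct and matches the paper's approach: the paper's proof of \autoref{Theorem:MSES} is simply the one-line remark that it follows directly from \autoref{Theorem:BiasVarS}, and you have spelled out exactly that bias--variance decomposition with the error-term bookkeeping made explicit. No gap.
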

\begin{proof}
  This follows directly from \autoref{Theorem:BiasVarS}.
\end{proof}

To calculate the optimal $m$ with respect to the MSE, one has to take the derivative with respect to $m$ of the above equation and set it to zero.
The next corollary, which is similar to Corollary 1 in \cite{leblancEstimatingDistributionFunctions2012}, follows.

\begin{Corollary}
  Assuming that $f(x)\neq 0$ and $f'(x) \neq 0$, the asymptotically optimal choice of $m$ for estimating $F(x)$ with respect to MSE is
  \begin{equation*}
    m_{opt}=n^{2/3}\left[\frac{4(b^S(x))^2}{V^S(x)}\right]^{2/3}.
  \end{equation*}
  Therefore, the associated MSE can be written as
  \begin{equation}
    \label{Eq:optMSESzasz}
    \MSE\left[\hat{F}_{m_{opt},n}^S(x)\right]
    =n^{-1}\sigma^2(x) -\frac{3}{4}n^{-4/3}\left[\frac{(V^S(x))^4}{4(b^S(x))^2}\right]^{1/3} +o_x(n^{-4/3})
  \end{equation}
  for $x \in (0,\infty)$, where $\sigma^2(x), b^S(x)$, and $V^S(x)$ are defined in \autoref{Theorem:BiasVarS}.
\end{Corollary}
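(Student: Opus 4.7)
The plan is to treat the assertion as a straightforward constrained optimization built on top of \autoref{Theorem:MSES}: the leading $n^{-1}\sigma^{2}(x)$ term does not depend on $m$, so it suffices to minimize the $m$-dependent part of the expansion and then substitute back. No new probabilistic machinery is required.

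First I would write down, for fixed $x\in(0,\infty)$ with $f(x)\neq 0$ and $f'(x)\neq 0$, the $m$-dependent leading contribution to the MSE, namely
\begin{equation*}
g(m) \;=\; -\,m^{-1/2}n^{-1}V^S(x) \;+\; m^{-2}\bigl(b^S(x)\bigr)^{2},
\end{equation*}
and note that under the hypothesis $f(x)\neq 0$ and $f'(x)\neq 0$ both $V^S(x)$ and $b^S(x)$ are nonzero, so the optimization is nondegenerate. Differentiating with respect to $m$, setting $g'(m)=0$, and rearranging yields $m^{3/2}=4n\,(b^S(x))^{2}/V^S(x)$, which gives the announced $m_{opt}=n^{2/3}\bigl[4(b^S(x))^{2}/V^S(x)\bigr]^{2/3}$. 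A second-derivative check confirms this is a minimum.

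Next I would substitute $m_{opt}$ back into $g(m)$. Letting $A=V^S(x)$ and $B=(b^S(x))^{2}$, one has $m_{opt}^{-1/2}=n^{-1/3}(A/(4B))^{1/3}$ and $m_{opt}^{-2}=n^{-4/3}(A/(4B))^{4/3}$, so
\begin{equation*}
g(m_{opt}) \;=\; n^{-4/3}\Bigl(\tfrac{A}{4B}\Bigr)^{1/3}\bigl[-A + \tfrac{A}{4B}\cdot B\bigr] \;=\; -\tfrac{3}{4}\,n^{-4/3}\Bigl[\tfrac{A^{4}}{4B}\Bigr]^{1/3},
\end{equation*}
which reproduces the displayed correction term in \autoref{Eq:optMSESzasz}.

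The only genuine thing to verify — and this is the step most likely to trip one up — is the behavior of the remainder terms $o_x(m^{-2})$ and $o_x(m^{-1/2}n^{-1})$ from \autoref{Theorem:MSES} when $m$ is replaced by $m_{opt}=\Theta(n^{2/3})$. With that choice one has $m_{opt}^{-2}=\Theta(n^{-4/3})$ and $m_{opt}^{-1/2}n^{-1}=\Theta(n^{-1/3}n^{-1})=\Theta(n^{-4/3})$, so both remainders collapse to $o_x(n^{-4/3})$ and can be absorbed into the single $o_x(n^{-4/3})$ appearing in the stated MSE. This gives the claim, with the comment that strictly speaking one is minimizing the asymptotic expansion rather than the exact MSE — an observation already made in \cite{leblancEstimatingDistributionFunctions2012} for the Bernstein case and carried over here verbatim.
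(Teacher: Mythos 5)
Your proposal is correct and follows exactly the route the paper intends: differentiate the $m$-dependent part of the expansion in \autoref{Theorem:MSES}, solve $\tfrac12 m^{-3/2}n^{-1}V^S(x)=2m^{-3}(b^S(x))^2$ for $m$, substitute back, and check that both remainders become $o_x(n^{-4/3})$ when $m=\Theta(n^{2/3})$. The algebra and the handling of the error terms are all in order, so nothing needs to be added.
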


In \cite{gawronskiDensityEstimationMeans1980}, it is stated that the optimal $m$ to estimate the density function with respect to the MSE is $O(n^{2/5})$. We just established that for the distribution function, the optimal rate is $O(n^{2/3})$. The same phenomenon that was first observed by  \cite{hjortNoteKernelDensity2001} for the kernel estimator and explained in \cite{leblancEstimatingDistributionFunctions2012} for the Bernstein estimator can be found here.

When using $m = O(n^{2/5})$ for the distribution estimation, it lies outside of any confidence band of $F$. This holds because of the fact that from $mn^{-2/5} \rightarrow c$ it follows that $mn^{-1/2} \rightarrow 0$. Together with $f'(x) \neq 0$ and \autoref{Eq:AsyBernsteinS}, it holds that
\begin{equation*}
  \P\left(n^{1/2}\left|\hat{F}_{m,n}^S(x)-F(x)\right|>\epsilon\right) \rightarrow 1
\end{equation*}
for all $\epsilon > 0$. This shows that for this choice of $m$, $\hat{F}_{m,n}^S(x)$ does not converge to a limiting distribution centered at $F(x)$ with proper rescaling. Therefore, $\hat{F}_{m,n}^S$ lies outside of any confidence band based on $F_n$ with probability going to one.

%As before, we now take a closer look at the asymptotically optimal $m$ with respect to MISE.

\subsection{Asymptotically Optimal $m$ with Respect to Mean-integrated Squared Error}

We now focus on the mean-integrated squared error (MISE). As we deal with an infinite integral, we use a non-negative weight function $\omega$. Here, the weight function is chosen as $\omega(x)=e^{-ax}f(x)$. Following \cite{altmanBandwidthSelectionKernel1995}, the MISE is then defined by
\begin{equation*}
  \MISE\left[\hat{F}_{m,n}^S\right]=\E\left[\int_0^{\infty}\left(\hat{F}_{m,n}^S(x)-F(x)\right)^2e^{-ax}f(x)\diff x\right].
\end{equation*}
Technically, $\MISE\left[\hat{F}_{m,n}^S\right]$ cannot be calculated by integrating the expression of $\MSE\left[\hat{F}_{m,n}^S\right]$ obtained in \autoref{Eq:MSEFmnS} as the asymptotic expressions depend on $x$.
The next theorem gives the asymptotic MISE of the Szasz operator and is similar to Theorem 3 in  \cite{leblancEstimatingDistributionFunctions2012}.

\begin{Theorem}
  \label{Theorem:MISES}
  We have
  \begin{equation*}
    \MISE\left[\hat{F}_{m,n}^S\right]=n^{-1}C_1^S-m^{-1/2}n^{-1}C_2^S+m^{-2}C_3^S+o(m^{-1/2}n^{-1})+o(m^{-2})
  \end{equation*}
  with
  \begin{align*}
    C_1^S=\int_0^{\infty}\sigma^2(x)e^{-ax}f(x)&\diff x \enspace \text{,} \enspace C_2^S=\int_0^{\infty}V^S(x)e^{-ax}f(x)\diff x \text{, and}\nonumber\\
    &C_3^S=\int_0^{\infty} (b^S(x))^2e^{-ax}f(x)\diff x.
  \end{align*}
  The definitions of $\sigma^2(x), b^S(x)$, and $V^S(x)$ can be found in \autoref{Theorem:BiasVarS}.
\end{Theorem}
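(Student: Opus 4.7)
The starting point is to apply Tonelli's theorem (the integrand is non-negative) to exchange expectation and integration, writing
\[
\MISE[\hat F_{m,n}^S] = \int_0^\infty \MSE[\hat F_{m,n}^S(x)]\, e^{-ax}f(x)\,\diff x.
\]
Substituting the pointwise expansion from \autoref{Theorem:MSES}, the three dominant terms integrate directly to $n^{-1}C_1^S$, $-m^{-1/2}n^{-1}C_2^S$, and $m^{-2}C_3^S$, so the entire proof reduces to showing that the integrated $o_x$ remainders are genuinely $o(m^{-1/2}n^{-1})$ and $o(m^{-2})$ without the $x$-subscript.

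This uniformity is the main obstacle, since pointwise little-$o$ statements do not in general survive integration; the remedy is dominated convergence, but one must first secure $x$-uniform majorants. For the bias, Taylor's theorem applied to $F$, combined with the Poisson mean and variance identities used in \autoref{Lemma:LorentzS}, yields the bound
\[
|S_m(F;x)-F(x)| \leq \tfrac{1}{2}\|f'\|_\infty \cdot \frac{x}{m},
\]
so $m^{2}\Bias[\hat F_{m,n}^S(x)]^2$ is dominated by $\tfrac{1}{4}\|f'\|_\infty^2\, x^2$, which is integrable against $e^{-ax}f(x)$ by virtue of the exponential weight and the boundedness of $f$. Combined with the pointwise limit $m^2\Bias^2\to (b^S(x))^2$ furnished by \autoref{Lemma:LorentzS}, dominated convergence gives $\int \Bias^2\, e^{-ax}f(x)\,\diff x = m^{-2}C_3^S + o(m^{-2})$.

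For the variance, the representation
\[
\hat F_{m,n}^S(x) = \frac{1}{n}\sum_{i=1}^n g_x(X_i), \qquad g_x(t) = \P\bigl(Y\geq \lceil mt\rceil\bigr),\ Y\sim\Poi(mx),
\]
immediately yields the crude but uniform bound $n\Var[\hat F_{m,n}^S(x)] = \Var[g_x(X_1)] \leq 1/4$, which is integrable against $e^{-ax}f(x)$; together with the pointwise limit $n\Var \to \sigma^2(x)$, this gives $\int \Var\, e^{-ax}f(x)\,\diff x = n^{-1}C_1^S + o(n^{-1})$ by DCT. The hardest step is to refine this to the $m^{-1/2}n^{-1}$ correction: one needs the sharper pointwise convergence $m^{1/2}[n\Var(x) - \sigma^2(x)] \to -V^S(x)$ from \autoref{Theorem:BiasVarS}, together with an $x$-uniform majorant of the form $C\,f(x)\sqrt{x}$ or any other element of $L^1(e^{-ax}f(x)\,\diff x)$. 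Such a majorant can be extracted by retracing the proof of \autoref{Theorem:BiasVarS} and making explicit the constants in the Poisson tail estimates that drive the $m^{-1/2}$ correction; once available, DCT delivers the $-m^{-1/2}n^{-1}C_2^S$ contribution with remainder $o(m^{-1/2}n^{-1})$, and summing bias and variance contributions completes the proof.
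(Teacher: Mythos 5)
Your proposal is correct and follows essentially the same route as the paper: integrate the pointwise bias and variance expansions against the weight $e^{-ax}f(x)\,\diff x$ and justify the exchange of limit and integral by dominated convergence with explicit $x$-uniform majorants. The one step you defer --- the integrable majorant for the $m^{-1/2}$ variance correction --- is exactly what the paper supplies in \autoref{Lemma:EigenschaftenLRS}(h), where the Cauchy--Schwarz bound on the Poisson variance gives $m^{1/2}\bigl|R_{1,m}^S(x)\bigr|e^{-ax}\leq\sqrt{x}\,e^{-ax}$ (equivalently $\bigl|\tilde{R}_{1,m}^S(x)\bigr|\leq 2\sqrt{x}$), which is the majorant of the form $C\sqrt{x}$ you conjectured.
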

\noindent For the proof, see Section \nameref{Section:ProofsSzasz}.

Very similar to Corollary 4 in \cite{leblancEstimatingDistributionFunctions2012}, the next corollary gives the asymptotically optimal $m$ for estimating $F$ with respect to MISE.
\begin{Corollary}
  \label{Corollary:MISESzasz}
  The asymptotically optimal $m$ for estimating $F$ with respect to MISE is
  \begin{equation*}
    m_{opt}=n^{2/3}\left[\frac{4C_3^S}{C_2^S}\right]^{2/3},
  \end{equation*}
  which leads to the optimal MISE
  \begin{equation}
    \label{Eq:optMISESzasz}
    \MISE\left[\hat{F}_{m_{opt},n}^S\right] =n^{-1}C_1^S -\frac{3}{4}n^{-4/3}\left[\frac{(C_2^S)^4}{4C_3^S}\right]^{1/3} +o(n^{-4/3}).
  \end{equation}
\end{Corollary}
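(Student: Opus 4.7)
The plan is to treat the asymptotic expansion in \autoref{Theorem:MISES} as a function of the (continuous) variable $m$ with $n$ fixed, minimize its $m$-dependent part by elementary calculus, and then check that the $o(\cdot)$ remainders remain negligible once the optimizer is substituted.

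Concretely, the first move is to drop the $m$-independent piece $n^{-1}C_1^S$ and, for the moment, the error terms, and differentiate $g(m) = -m^{-1/2}n^{-1}C_2^S + m^{-2}C_3^S$ with respect to $m$. Setting $g'(m)=0$ yields $\tfrac{1}{2}m^{-3/2}n^{-1}C_2^S = 2m^{-3}C_3^S$, which rearranges to $m^{3/2} = 4n\,C_3^S/C_2^S$ and thus to the claimed $m_{opt} = n^{2/3}[4C_3^S/C_2^S]^{2/3}$. A quick second-derivative check (using the relation $m^{-3}C_3^S = \tfrac14 m^{-3/2}n^{-1}C_2^S$ at the critical point) shows $g''(m_{opt}) = \tfrac34 m_{opt}^{-5/2}n^{-1}C_2^S > 0$, so this is indeed a minimum. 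Positivity of $C_2^S$ and $C_3^S$ follows from \autoref{Assumption:3} and the definitions of $V^S$ and $b^S$ as soon as $f$ and $f'$ are not identically zero, which I would note in passing.

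Next, I would substitute $m_{opt}$ into the expansion. Both corrections $-m^{-1/2}n^{-1}C_2^S$ and $m^{-2}C_3^S$ then become multiples of $n^{-4/3}[(C_2^S)^4/(4C_3^S)]^{1/3}$, with numerical coefficients $-1$ and $+1/4$ respectively. Adding these gives the $-3/4$ prefactor in the stated optimal MISE. This step is pure arithmetic with exponents, so I would only record the result rather than work through it in detail.

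The one point that requires a sentence of justification, and is arguably the only real obstacle, is the passage of the little-$o$ terms through the substitution $m \mapsto m_{opt}$: the rates $o(m^{-1/2}n^{-1})$ and $o(m^{-2})$ in \autoref{Theorem:MISES} are stated under $m,n\to\infty$ without a prescribed coupling. Since $m_{opt} \sim n^{2/3}\to\infty$, we have $m_{opt}^{-1/2}n^{-1} = O(n^{-4/3})$ and $m_{opt}^{-2} = O(n^{-4/3})$, so both remainders collapse to $o(n^{-4/3})$ and can be absorbed into the final error term of \autoref{Eq:optMISESzasz}. With that observation in place, the corollary follows, in complete parallel to Corollary~4 of \cite{leblancEstimatingDistributionFunctions2012}.
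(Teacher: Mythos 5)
Your proposal is correct and follows essentially the same route the paper (implicitly) takes: differentiate the $m$-dependent part of the expansion in \autoref{Theorem:MISES}, solve for the critical point, and substitute back, exactly as the paper indicates for the analogous MSE corollary. Your extra care with the second-derivative check and with verifying that the remainders $o(m^{-1/2}n^{-1})$ and $o(m^{-2})$ collapse to $o(n^{-4/3})$ at $m=m_{opt}$ is a welcome tightening of an argument the paper leaves implicit.
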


If we compare the optimal MSE and optimal MISE of the Szasz estimator with those of the EDF, we observe the same behavior as for the Bernstein estimator. The second summand (including the minus sign ahead of it) in \autoref{Eq:optMSESzasz} and \autoref{Eq:optMISESzasz} is always negative so that the Szasz estimator seems to outperform the EDF. This is proven in the following.

\subsection{Asymptotic deficiency of the empirical distribution function}

We now measure the local and global performance of the Szasz estimator with the help of the deficiency. Let
\begin{align*}
  i_L^S(n,x)&=\text{min}\left\{k\in\N: \MSE[F_k(x)]\leq\MSE\left[\hat{F}_{m,n}^S(x)\right]\right\}, \enspace \text{and}\nonumber\\
  i_G^S(n)&=\text{min}\left\{k\in\N: \MISE[F_k]\leq\MISE\left[\hat{F}_{m,n}^S\right]\right\}
\end{align*}
be the local and global numbers of observations that $F_n$ needs to perform at least as well as $\hat{F}_{m,n}^S$. The next theorem deals with these quantities and is similar to Theorem 4 in \cite{leblancEstimatingDistributionFunctions2012}.

\noindent
\vbox{
\begin{Theorem}
  \label{Theorem:DeficiencyS}
  Let $x\in(0,\infty)$ and $m,n \rightarrow \infty$. If $mn^{-1/2}\rightarrow \infty$, then,
  \begin{equation*}
    i_L^S(n,x)=n[1+o_x(1)] \enspace \text{and} \enspace i_G^S(n)=n[1+o(1)].
  \end{equation*}
  In addition, the following statements are true.
  \begin{enumerate}
    \item[(a)] If $mn^{-2/3} \rightarrow \infty$ and $mn^{-2} \rightarrow 0$, then
    \begin{align*}
        i_L^S(n,x)-n&=m^{-1/2}n[\theta^S(x)+o_x(1)], \enspace \text{and}\\
        i_G^S(n)-n&=m^{-1/2}n[C_2^S/C_1^S+o(1)].
    \end{align*}
    \item[(b)] If $mn^{-2/3} \rightarrow c$, where $c$ is a positive constant, then
    \begin{align*}
        i_L^S(n,x)-n&=n^{2/3}[c^{-1/2}\theta^S(x)-c^{-2}\gamma^S(x)+o_x(1)], \enspace \text{and}\\
        i_G^S(n)-n&=n^{2/3}[c^{-1/2}C_2^S/C_1^S-c^{-2}C_3^S/C_1^S+o(1)],
    \end{align*}
  \end{enumerate}
  where
    \begin{equation*}
      \theta^S(x)=\frac{V^S(x)}{\sigma^2(x)} \enspace \text{and} \enspace \gamma^S(x)=\frac{(b^S(x))^2}{\sigma^2(x)}.
    \end{equation*}
  Here, $V^S(x), \sigma^2(x)$, and $b^S(x)$ are defined in \autoref{Theorem:BiasVarS} and $C_1^S, C_2^S$, and $C_3^S$ are defined in \autoref{Theorem:MISES}.
\end{Theorem}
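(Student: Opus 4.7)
The plan is to combine the asymptotic expansions of \autoref{Theorem:BiasVarS} and \autoref{Theorem:MISES} with the elementary identities for the unbiased EDF, $\MSE[F_k(x)] = k^{-1}\sigma^2(x)$ and $\MISE[F_k] = k^{-1}C_1^S$, and then invert to solve for the smallest admissible $k$. Since $k \mapsto k^{-1}$ is strictly decreasing, $i_L^S(n,x)$ differs from the real-valued root of $\MSE[F_k(x)] = \MSE[\hat{F}_{m,n}^S(x)]$ by at most one; because this $O(1)$ discrepancy is negligible next to every leading correction that appears, I treat the defining relation as an exact equation.

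Dividing this equation by $\sigma^2(x)/n$, which is legitimate since $0 < F(x) < 1$ ensures $\sigma^2(x) > 0$, produces
\begin{equation*}
  \frac{n}{i_L^S(n,x)} = 1 - m^{-1/2}\theta^S(x) + nm^{-2}\gamma^S(x) + o_x(m^{-1/2}) + o_x(nm^{-2}).
\end{equation*}
Because $mn^{-1/2} \to \infty$ forces both $m^{-1/2} \to 0$ and $nm^{-2} \to 0$, the expansion $(1-u)^{-1} = 1 + u + O(u^2)$ applies and yields
\begin{equation*}
  i_L^S(n,x) = n\bigl[1 + m^{-1/2}\theta^S(x) - nm^{-2}\gamma^S(x) + o_x(m^{-1/2}) + o_x(nm^{-2})\bigr].
\end{equation*}
The leading assertion $i_L^S(n,x) = n[1+o_x(1)]$ is then immediate. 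For part (a), the hypothesis $mn^{-2/3} \to \infty$ gives $nm^{-3/2} \to 0$, so $nm^{-2}\gamma^S(x)$ is absorbed into $o_x(m^{-1/2})$ and only the $m^{-1/2}\theta^S(x)$ term survives, yielding $i_L^S(n,x) - n = m^{-1/2}n[\theta^S(x) + o_x(1)]$. For part (b), substituting $m = cn^{2/3}(1+o(1))$ makes both $m^{-1/2} = c^{-1/2}n^{-1/3}(1+o(1))$ and $nm^{-2} = c^{-2}n^{-1/3}(1+o(1))$ of the common order $n^{-1/3}$, and multiplication by $n$ produces the claimed $n^{2/3}$-formula.

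The global assertions for $i_G^S(n)$ follow from exactly the same manipulation applied to the MISE equation: replace $\sigma^2(x), V^S(x), (b^S(x))^2$ by $C_1^S, C_2^S, C_3^S$ throughout, so that $\theta^S(x)$ and $\gamma^S(x)$ are replaced by $C_2^S/C_1^S$ and $C_3^S/C_1^S$ respectively. The main obstacle is bookkeeping: every $o_x$-remainder from the MSE expansion (both $o_x(m^{-1/2}n^{-1})$ and $o_x(m^{-2})$) must remain strictly subordinate to the leading correction after multiplication by $n$ and geometric-series inversion; similarly, the $O(u^2)$ remainder in $(1-u)^{-1} = 1 + u + O(u^2)$ must not overwhelm the linear part. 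The auxiliary assumption $mn^{-2} \to 0$ in part (a) plays exactly this role, guaranteeing in particular that the leading correction $m^{-1/2}n \to \infty$ so that the $O(1)$ integer-rounding error inherent in the definition of $i_L^S(n,x)$ becomes genuinely negligible.
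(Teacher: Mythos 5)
Your proposal is correct and follows essentially the same route as the paper: both start from the exact identities $\MSE[F_k(x)]=k^{-1}\sigma^2(x)$ and $\MISE[F_k]=k^{-1}C_1^S$, divide the asymptotic MSE/MISE expansion by $n^{-1}\sigma^2(x)$ (resp.\ $n^{-1}C_1^S$), and invert, with the case hypotheses serving only to sort the correction terms $m^{-1/2}$ and $nm^{-2}$. The paper merely formalizes your ``integer rounding is negligible'' remark via the two-sided inequality $\MSE[F_{i(n)}]\leq\MSE[\hat F^S_{m,n}]\leq\MSE[F_{i(n)-1}]$ and the quantities $A_{1,n}$, $A_{2,n}$, which is the same idea in a more explicit form.
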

\noindent For the proof, see Section \nameref{Section:ProofsSzasz}.}

This theorem shows under which conditions the Szasz estimator outperforms the EDF. The asymptotic deficiency goes to infinity as $n$ grows. This means that for increasing $n$, the number of extra observations also has to increase to infinity so that the EDF outperforms the Szasz estimator. Hence, the EDF is asymptotically deficient to the Szasz estimator.

It seems natural that one can also base the selection of an optimal $m$ on the deficiency. Indeed, maximizing the deficiency seems a good way to make sure that the Szasz estimator outperforms the EDF as much as possible.

%The following statement follows from arguments in \cite{leblancEstimatingDistributionFunctions2012}.
\begin{Lemma}
   The optimal $m$ with respect to the global deficiency in the case $mn^{-2/3} \rightarrow c$ is of the same order as in \autoref{Corollary:MISESzasz}.
\end{Lemma}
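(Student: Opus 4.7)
The plan is to optimize the leading-order coefficient of the asymptotic expression for the global deficiency given in part (b) of \autoref{Theorem:DeficiencyS}. In the regime $mn^{-2/3} \to c > 0$, that theorem yields
\begin{equation*}
  i_G^S(n) - n = n^{2/3}\bigl[c^{-1/2}\, C_2^S/C_1^S - c^{-2}\, C_3^S/C_1^S + o(1)\bigr].
\end{equation*}
Since $C_1^S, C_2^S, C_3^S$ do not depend on $c$, maximising the deficiency to leading order amounts to maximising
\begin{equation*}
  h(c) = c^{-1/2} A - c^{-2} B, \qquad A := C_2^S/C_1^S,\ B := C_3^S/C_1^S,
\end{equation*}
over $c > 0$.

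First I would compute $h'(c) = -\tfrac{1}{2} c^{-3/2} A + 2 c^{-3} B$ and set this to zero. Solving gives $c^{3/2} = 4B/A$, and hence
\begin{equation*}
  c_{opt} = \bigl(4 C_3^S / C_2^S\bigr)^{2/3},
\end{equation*}
so that the associated optimiser is $m_{opt} = c_{opt}\, n^{2/3} = n^{2/3}\bigl[4 C_3^S/C_2^S\bigr]^{2/3}$, which coincides exactly (not merely in order) with the MISE-optimal choice in \autoref{Corollary:MISESzasz}. A quick check on $h''$ (or noting $h(c)\to-\infty$ as $c\to 0^+$ because of the $-c^{-2}B$ term, and $h(c)\to 0^+$ as $c\to\infty$) confirms that $c_{opt}$ is a global maximum on $(0,\infty)$.

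The only delicate point is going from optimising the leading coefficient to asymptotic optimality of the deficiency itself. Because $i_G^S(n)-n = n^{2/3} h(c) + o(n^{2/3})$ whenever $mn^{-2/3}\to c$, the argmax of $h$ gives an asymptotically optimal $m$ within this scaling regime. The alternative regime $mn^{-2/3}\to\infty$ of part (a) forces $c\to\infty$, in which limit the leading constant $c^{-1/2} C_2^S/C_1^S$ tends to zero and thus cannot exceed $h(c_{opt})>0$; the remaining regime $mn^{-2/3}\to 0$ is excluded since the bias contribution $-c^{-2}B$ blows up and the deficiency becomes negative. Hence the maximum is attained in regime (b) at $c_{opt}$, and $m_{opt}$ is of the same order (and with the same constant) as in \autoref{Corollary:MISESzasz}.
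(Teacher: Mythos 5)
Your proposal is correct and follows essentially the same route as the paper: both identify the leading coefficient $g(c)=c^{-1/2}C_2^S/C_1^S-c^{-2}C_3^S/C_1^S$ from part (b) of \autoref{Theorem:DeficiencyS} and maximize it over $c>0$ to obtain $c_{opt}=\left[4C_3^S/C_2^S\right]^{2/3}$ and hence $m_{opt}=n^{2/3}[c_{opt}+o(1)]$. Your additional checks (second-order/boundary behaviour and ruling out the other scaling regimes) go slightly beyond the paper's brief remark that the deficiency is positive only for $c>[C_3^S/C_2^S]^{2/3}$, but the substance is the same.
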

\begin{proof}
  The proof follows arguments in \cite{leblancEstimatingDistributionFunctions2012}. In the case $mn^{-2/3} \rightarrow c$, the deficiency $i_G(n)-n$ is asymptotically positive only when $c > \left[\frac{C_3^S}{C_2^S}\right]^{2/3}=c^*$.
  Then, the optimal $c$ maximizing $g(c)=c^{-1/2}C_2^S/C_1^S-c^{-2}C_3^S/C_1^S$ is
  \begin{equation*}
    c_{opt}=\left[\frac{4C_3^S}{C_2^S}\right]^{2/3}=2^{4/3}c^*.
  \end{equation*}
  Hence, the optimal order of the Szasz estimator with respect to the deficiency satisfies
  \begin{equation*}
    m_{opt}n^{-2/3} \rightarrow c_{opt} \Leftrightarrow m_{opt}=n^{2/3}[c_{opt}+o(1)].
  \end{equation*}
\end{proof}

\section{Theoretical comparison}
\label{Section:Comparison}

\DoubleFigureHere{ComparisonSzaszBernstein}{The behavior of the Szasz estimator at $x=1$ for $n=500$.}

In the following, the properties that were derived in this paper for the Szasz estimator are compared to the different estimators defined in the introduction. The comparison can be found in Tables \ref{tab:1}-\ref{tab:4}. The assumptions in the third column of the first table have to be fulfilled for the theoretical results to hold. If there are extra assumptions for one specific result, they are written as a footnote. More details can be found in \cite{hanebeckNonparametricDistributionFunction2020}.

For the EDF, the properties mainly follow from famous theorems. The uniform, almost sure convergence follows from the Glivenko-Cantelli theorem while the asymptotic normality can be proven with the central limit theorem. The MSE can be found in \cite{lockhartBasicsNonparametricModels2013} and the other properties are easy to calculate.
For the kernel estimator, the asymptotic normality can be found in \cite{watsonHazardAnalysisII1964} and \cite{zhangEstimatingDistributionFunction2020}, while bias and variance can be found in % \cite{shaliziEstimatingDistributionsDensities2009} and
\cite{kimBiasReducingTechnique2006}. The optimal MSE and MISE can be found in \cite{zhangEstimatingDistributionFunction2020}.
%, \cite{azzaliniNoteEstimationDistribution1981}, and \cite{jonesPerformanceKernelDensity1990}.
The properties for the Bernstein estimator mainly follow from \cite{leblancEstimatingDistributionFunctions2012}, where some results are using ideas from \cite{babuApplicationBernsteinPolynomials2002}.
The ideas and most of the proofs for the Hermite estimators can be found in \cite{stephanouSequentialQuantilesHermite2017} and \cite{stephanouPropertiesHermiteSeries2020a} for the estimator on the real half line and on the real line respectively.

The results on the asymptotic normality of the Hermite estimators are new. For the Hermite estimator on the real half line, the following theorem holds.
\begin{Theorem}
    \label{Theorem:NormalHalf}
    For $x \in (0, \infty)$ with $0<F(x)<1$ and if $f$ is differentiable in $x$, we obtain
    \begin{equation*}
        \sqrt{n}\left(\hat{F}_{N,n}^H(x)-\E\left[\hat{F}_{N,n}^H(x)\right]\right) \xrightarrow{D} \mathcal{N}\left(0,\sigma^2(x)\right),
    \end{equation*}
    for $n \rightarrow \infty$, where $\sigma^2(x)=F(x)(1-F(x))$.
\end{Theorem}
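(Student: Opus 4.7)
The plan is to reduce the claim to the classical central limit theorem for the empirical distribution function via a Slutsky-type argument, exploiting the fact that $\hat{F}_{N,n}^H(x)$ is, in disguise, a sample average of i.i.d.\ random variables constructed from a truncated Hermite expansion of an indicator function.

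First, I would interchange summation to obtain the representation
\begin{equation*}
\hat{F}_{N,n}^H(x)=\sum_{k=0}^N \hat{a}_k\, H_k^*(x)=\frac{1}{n}\sum_{i=1}^n g_x^N(X_i),
\end{equation*}
where $H_k^*(x):=\int_0^x h_k(t)\diff t$ and $g_x^N(t):=\sum_{k=0}^N H_k^*(x)\, h_k(t)$. The crucial observation is that $H_k^*(x)$ is precisely the $k$-th Hermite coefficient of $\mathbb{I}_{[0,x]}\in L_2(\R)$, so $g_x^N$ is the $N$-th partial sum of the Hermite expansion of $\mathbb{I}_{[0,x]}$. Since $\{h_k\}$ is a complete orthonormal basis of $L_2(\R)$, Parseval's identity gives $\|g_x^N-\mathbb{I}_{[0,x]}\|_{L_2(\R)}\to 0$ as $N\to\infty$.

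Second, I would use the decomposition
\begin{equation*}
\sqrt{n}\left(\hat{F}_{N,n}^H(x)-\E[\hat{F}_{N,n}^H(x)]\right)=\sqrt{n}\left(F_n(x)-F(x)\right)+R_n,
\end{equation*}
where
\begin{equation*}
R_n=\frac{1}{\sqrt{n}}\sum_{i=1}^n\Bigl\{\bigl(g_x^N(X_i)-\mathbb{I}(X_i\le x)\bigr)-\E\bigl[g_x^N(X_1)-\mathbb{I}(X_1\le x)\bigr]\Bigr\}.
\end{equation*}
Being a centered sum of i.i.d.\ terms, $R_n$ satisfies, using $X_1\ge 0$ a.s.\ and the boundedness of $f$ guaranteed by \autoref{Assumption:3},
\begin{equation*}
\E[R_n^2]=\Var\bigl(g_x^N(X_1)-\mathbb{I}(X_1\le x)\bigr)\le \int_0^\infty\bigl(g_x^N(t)-\mathbb{I}_{[0,x]}(t)\bigr)^2 f(t)\diff t\le \|f\|_\infty\,\|g_x^N-\mathbb{I}_{[0,x]}\|_{L_2(\R)}^2.
\end{equation*}
Under the (implicit) coupling $N=N(n)\to\infty$ — without which the limiting variance could not equal $\sigma^2(x)$ — the right-hand side tends to zero by the first step, so $R_n=o_P(1)$.

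Finally, the classical central limit theorem applied to $F_n(x)$ yields $\sqrt{n}(F_n(x)-F(x))\xrightarrow{D}\mathcal{N}(0,F(x)(1-F(x)))$, and Slutsky's lemma delivers the claim. The main obstacle is rather mild: it is the $L_2(\R)$-convergence of the Hermite partial sums of $\mathbb{I}_{[0,x]}$ combined with the transfer from $L_2(\R)$ to $L_2(\P)$ via the bound $\|f\|_\infty$. Notably, the hypothesis that $f$ is differentiable at $x$ does not appear to be needed for this argument; it is presumably retained for consistency with the companion Hermite density results.
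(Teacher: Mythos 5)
Your argument is correct, and it takes a genuinely different route from the paper. The paper writes $\hat{F}_{N,n}^H(x)-\E[\hat{F}_{N,n}^H(x)]=\frac{1}{n}\sum_{i=1}^n Y_{i,N}$ with $Y_{i,N}=\int_0^x\bigl[T_N(X_i,t)-\sum_{k=0}^N a_kh_k(t)\bigr]\diff t$ and applies the central limit theorem for double arrays: it verifies the Lindeberg condition and, crucially, computes $\gamma_N^2=\E[Y_{1,N}^2]\rightarrow\sigma^2(x)$ by replacing the Christoffel--Darboux kernel $T_N$ with the sinc kernel $\frac{\sin(M(r-t))}{\pi(r-t)}+O(N^{-1/2})$ (Liebscher's Eq.\ (A8)) and justifying an exchange of limit and integral by dominated convergence --- this is where most of the technical effort, and the local regularity of $f$, enters. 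You instead observe that $g_x^N=\sum_{k=0}^N\bigl(\int_0^x h_k\bigr)h_k$ is the $N$-th Hermite partial sum of $\mathbb{I}_{[0,x]}\in L_2(\R)$, so completeness of $\{h_k\}$ gives $\|g_x^N-\mathbb{I}_{[0,x]}\|_{L_2(\R)}\rightarrow 0$, and the remainder $R_n$ is killed by the elementary variance bound $\E[R_n^2]\leq\|f\|_\infty\|g_x^N-\mathbb{I}_{[0,x]}\|_{L_2(\R)}^2$ (boundedness of $f$ being available from \autoref{Assumption:3}); the classical CLT for $F_n(x)$ and Slutsky finish the proof. Your route is shorter and more elementary, needs no Lindeberg verification and no kernel asymptotics, and --- as you correctly note --- dispenses with the differentiability of $f$ at $x$; both routes share the same implicit requirement $N=N(n)\rightarrow\infty$, without which the limit variance cannot be $\sigma^2(x)$. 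What the paper's heavier computation buys in exchange is the explicit statement $\gamma_N^2\rightarrow\sigma^2(x)$, i.e.\ convergence of the normalized variance of the estimator itself, which your Slutsky argument does not deliver directly.
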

The proof can be found in the appendix.
For the theorem on the real line and the corresponding proof, we refer to \cite{hanebeckNonparametricDistributionFunction2020}.

%\paragraph{How to compare}
It is important to always make sure that the situation fits to compare different estimators.
A comparison between the Bernstein estimator and the Szasz estimator for example only makes sense when the density function on $[0,1]$ can be continued to $[0,\infty)$ so that \autoref{Assumption:3} holds. Of course it is also possible to use the Szasz estimator for distributions where $F$ is continuous on $[0,\infty)$ and $f$ is not. Then, the theoretical results do not hold anymore but convergence is still given. But we know that the Bernstein estimator is always better as it has zero bias and variance for $x=1$, while the Szasz estimator has the continuous derivative
\begin{align*}
    \frac{d}{\diff x}\hat{F}_{m,n}^S(x)=m\sum_{k=0}^{\infty}\left[F_n\left(\frac{k+1}{m}\right)-F_n\left(\frac{k}{m}\right)\right]e^{-mx}\frac{(mx)^k}{k!}
\end{align*}
and cannot approximate a non-continuous function that well. This can be seen in \autoref{ComparisonSzaszBernstein}. It is obvious that the behavior of the Szasz estimator at $x=1$ of the $\text{Beta}(2,1)$-distribution is worse.
%This can also be seen later in the simulation in \autoref{Section:Simulation}.

For the Hermite estimators the properties $f \in L_2$ and $\left(x-\frac{d}{\diff x}f\right)^rf \in L_2$ only have to hold on the considered interval. Hence, they can be used for smaller intervals than what they were designed for.

The EDF and the kernel distribution function estimator can be used on arbitrary intervals. However, note that the asymptotic results for the kernel estimator hold under the assumption that the density occupies $(-\infty,\infty)$. Hence, if the support is bounded, the results do not hold for the points close to the boundary. For an approach to improve this boundary behavior, see \cite{zhangEstimatingDistributionFunction2020} for example.

\subsection{Some Observations}
In the following, some important observations regarding the theoretical comparison are listed. It is notable that for the asymptotic order, $h=1/m$ for the Bernstein estimator is always replaced by $h^2$ for the Kernel estimator. Also, the results for the Szasz estimator are the same as for the Bernstein estimator with the exception that the orders are often not uniform.

There are some properties that some or all of the estimators have in common.
Regarding the deficiency, the Bernstein estimator, the kernel estimator, and the Szasz estimator all outperform the EDF with respect to MSE and MISE.
All of the estimators convergence a.s. uniformly to the true distribution function, and the asymptotic distribution of the scaled difference between estimator and the true value always coincide under different assumptions.

However, there are of course also many differences between the estimators that are addressed now.
For the Bernstein estimator and the Szasz estimator, the order of the bias is worse than that of the kernel estimator. The order of the Hermite estimator on the real half line depends on $x$.
This is not the case for the estimator on the real line. On the other hand, the order for the estimator on the real line is worse.

For the variance, the orders of the Bernstein estimator and the Szasz estimator are the same as for the EDF and the kernel estimator but are not uniform. The Hermite estimator on the real line is worse than the estimator for the real half line but uniform. Their orders are both worse than that of the other estimators.

The optimal rate of the MSE is $n^{-1}$ for the first four estimators in the table, two of them uniform and the others not. The rates of the Hermite estimators are worse but for $r \rightarrow \infty$, the rates also approach $n^{-1}$. This is very similar for the optimal rates of the MISE.

\begin{savenotes}
\begin{table}%[hh]
  \centering
    \caption{Support of the estimators and assumptions}
    \label{tab:1}
    \begin{tabular}{|C{3cm}|C{3cm}|C{4.5cm}|}
        \hline
        \begin{tabular}{c}\makecell{\phantom{}\\ \phantom{}}\end{tabular} & Support & Assumptions
        \\\hline

        \begin{tabular}{c}\makecell{EDF}\end{tabular}
        & \makecell{Chosen Freely}
        &
        \\\hline

        \begin{tabular}{c}\makecell{Kernel}\end{tabular}
        & \makecell{Chosen Freely}
        & \makecell{Density $f$ exists, $f'$ exists\\ and is continuous}
        \\\hline

        \begin{tabular}{c}\makecell{Bernstein}\end{tabular}
        & \makecell{$[0,1]$}
        & \makecell{$F$ continuous, two\\ continuous and bounded\\ derivatives on $[0,1]$}
        \\\hline

        \begin{tabular}{c}\makecell{Szasz}\end{tabular}
        & \makecell{$[0,\infty)$}
        & \makecell{$F$ continuous, two\\ continuous and bounded\\ derivatives on $[0,\infty)$}
        \\\hline

        \begin{tabular}{c}\makecell{Hermite Half}\end{tabular}
        & \makecell{$[0,\infty)$}
        & \makecell{$f \in L_2$}
        \\\hline

        \begin{tabular}{c}\makecell{Hermite Full}\end{tabular}
        & \makecell{$(-\infty,\infty)$}
        & \makecell{$f \in L_2$}
        \\\hline
    \end{tabular}
\end{table}
\end{savenotes}

\begin{savenotes}
\begin{table}%[hh]
  \centering
    \caption{Convergence behavior and asymptotic distribution of the estimators}
    \label{tab:2}
    \begin{tabular}{|C{3.0cm}|C{3.0cm}|C{6.5cm}|}
        \hline
        \begin{tabular}{c}\makecell{\phantom{}\\ \phantom{}}\end{tabular} & Convergence%\footnote{a.s. uniform means that $\|\text{Estimator}-F\|\rightarrow 0$ a.s.}
        & \makecell{Asymptotic distribution:\\ $n^{1/2}(\hat{F}_{n}(x)-F(x)) \xrightarrow{D} \mathcal{N}\left(0,\sigma^2(x)\right)$ \footnote{$\hat{F}_n$ stands for all of the estimators, for $x: 0<F(x)<1$}}
        \\\hline

        \begin{tabular}{c}\makecell{EDF}\end{tabular}
        & \makecell{a.s. uniform}
        & %$n^{1/2}(F_{n}(x)-\mathbb{E}[F_{n}(x)]) \xrightarrow{D} \mathcal{N}\left(0,\sigma^2(x)\right)$
        \\\hline

        \begin{tabular}{c}\makecell{Kernel}\end{tabular}
        & \makecell{a.s. uniform}
        & \makecell{For $h^{-2}n^{-1/2} \rightarrow \infty$} %$n^{1/2}(F_{h,n}(x)-\mathbb{E}[F_{h,n}(x)]) \xrightarrow{D} \mathcal{N}\left(0,\sigma^2(x)\right)$
        \\\hline

        \begin{tabular}{c}\makecell{Bernstein}\end{tabular}
        & \makecell{a.s. uniform}
        & \makecell{For $mn^{-1/2} \rightarrow \infty$} %$n^{1/2}\left(\hat{F}_{m,n}(x)-\mathbb{E}[\hat{F}_{m,n}(x)]\right) \xrightarrow{D} \mathcal{N}\left(0,\sigma^2(x)\right)$
        \\\hline

        \begin{tabular}{c}\makecell{Szasz}\end{tabular}
        & \makecell{a.s. uniform}
        & \makecell{For $mn^{-1/2} \rightarrow \infty$} %$n^{1/2}\left(\hat{F}_{m,n}^S(x)-\mathbb{E}[\hat{F}_{m,n}^S(x)]\right) \xrightarrow{D} \mathcal{N}\left(0,\sigma^2(x)\right)$
        \\\hline

        \begin{tabular}{c}\makecell{Hermite Half}\end{tabular}
        & \makecell{a.s. uniform \footnote{For $\left(x-\frac{d}{\diff x}\right)^rf\in L_2, r\geq 1, \E\left[|X|^{s}\right]<\infty, s>\frac{8(r+1)}{3(2r+1)}, N \sim n^{\frac{2}{2r+1}}$}}
        & \makecell{For $N^{r/2-1/4}n^{-1/2} \rightarrow \infty$ \footnote{For $\left(x-\frac{d}{\diff x}\right)^rf\in L_2, r\geq 1, \E\left[|X|^{2/3}\right]<\infty$}} %$n^{1/2}\left(\hat{F}_{N,n}^H(x)-\E\left[\hat{F}_{N,n}^H(x)\right]\right) \xrightarrow{D} \mathcal{N}\left(0,\sigma^2(x)\right)$
        \\\hline

        \begin{tabular}{c}\makecell{Hermite Full}\end{tabular}
        & \makecell{a.s. uniform \footnote{For $\left(x-\frac{d}{\diff x}\right)^rf\in L_2, r>2, \E\left[|X|^{s}\right]<\infty, s>\frac{8(r+1)}{3(2r+1)}, N \sim n^{\frac{2}{2r+1}}$}}
        & \makecell{For $N^{r/2-1}n^{-1/2} \rightarrow \infty$ \footnote{For $\left(x-\frac{d}{\diff x}\right)^rf\in L_2, r>2$}} %$n^{1/2}\left(\hat{F}_{N,n}^F(x)-\E\left[\hat{F}_{N,n}^F(x)\right]\right) \xrightarrow{D} \mathcal{N}\left(0,\sigma^2(x)\right)$
        \\\hline
    \end{tabular}
\end{table}
\end{savenotes}

\begin{savenotes}
\begin{table}%[hh]
  \centering
    \caption{Bias and variance of the estimators}
    \label{tab:3}
    \begin{tabular}{|C{3.0cm}|C{3.5cm}|C{4.5cm}|}
        \hline
            \makecell{\phantom{} \\ \phantom{}} & Bias & Variance\\\hline

            \begin{tabular}{c}\makecell{EDF}\end{tabular}
            & Unbiased
            & $O(n^{-1})$
            \\\hline

            \begin{tabular}{c}\makecell{Kernel}\end{tabular}
            & $o(h^2)$
            & $O(n^{-1})+O(h/n)$
            \\\hline

            \begin{tabular}{c}\makecell{Bernstein}\end{tabular}
            & \makecell{Zero at $\{0,1\}$ \\ $O(m^{-1})=O(h)$}
            & \makecell{Zero at $\{0,1\}$ \\ $O(n^{-1})+O_x(m^{-1/2}n^{-1})$}
            \\\hline

            \begin{tabular}{c}\makecell{Szasz}\end{tabular}
            & \makecell{Zero at $0$ \\ $O_x(m^{-1})=O_x(h)$}
            & \makecell{Zero at $0$ \\ $O(n^{-1})+O_x(m^{-1/2}n^{-1})$}
            \\\hline

            \begin{tabular}{c}\makecell{Hermite Half}\end{tabular}
            & \makecell{Zero at $0$ \\ $O_x\left(N^{-r/2+1/4}\right)$\footnote{\label{HalfBias}For $\left(x-\frac{d}{\diff x}\right)^rf\in L_2, r\geq 1, \E\left[|X|^{2/3}\right]<\infty$}}
            & \makecell{Zero at $0$ \\ $O_x(N^{3/2}/n)$\footnote{\label{HalfVariance}For $\E\left[|X|^{2/3}\right]<\infty$}}
            \\\hline

            \begin{tabular}{c}\makecell{Hermite Full}\end{tabular}
            & $O(N^{1-r/2})$\footnote{\label{FullBias}For $\left(x-\frac{d}{\diff x}\right)^rf\in L_2, r>2$}
            & $O(N^{5/2}/n)$\footnoteref{HalfVariance}%\footnote{For $\E\left[|X|^{2/3}\right]<\infty$}
            \\\hline
        \end{tabular}
\end{table}
\end{savenotes}

\begin{savenotes}
\begin{table}%[hh]
  \centering
    \caption{MSE and MISE of the estimators}
    \label{tab:4}
    \begin{tabular}{|C{6.5cm}|C{6.5cm}|}
        \hline
            %\makecell{\phantom{} \\ \phantom{}} &
            MSE (all consistent) & MISE (all consistent)\\\hline

            %\begin{tabular}{c}\makecell{EDF}\end{tabular}
            %&
            $O(n^{-1})$
            & \begin{tabular}{c}$O(n^{-1})$\end{tabular}
            \\\hline

            %\begin{tabular}{c}\makecell{Kernel}\end{tabular}
            %&
            \makecell{$O(n^{-1})+O\left(h^4\right)+O(h/n)$ \\ Optimal: $O(n^{-1})$}%\footnote{For $K$ supported on $[-b,b]$ and density function $f$ that is continuous on $(x-hb,x+hb)$ with existing $f'(x)$}
            & \begin{tabular}{c}\makecell{$O(n^{-1})+O(h^4)+O(h/n)$ \\ Optimal: $O(n^{-1})$}\end{tabular}
            \\\hline

            %\begin{tabular}{c}\makecell{Bernstein}\end{tabular}
            %&
            \makecell{Zero at $\{0,1\}$ \\ $O(n^{-1})+O(m^{-2})+O_x(m^{-1/2}n^{-1})$ \\ Optimal: $O_x(n^{-1})$}
            & \begin{tabular}{c}\makecell{$O(n^{-1})+O(m^{-2})+O(m^{-1/2}n^{-1})$ \\ Optimal: $O(n^{-1})$}\end{tabular}
            \\\hline

            %\begin{tabular}{c}\makecell{Szasz}\end{tabular}
            %&
            \makecell{Zero at $0$ \\ $O(n^{-1})+O_x(m^{-2})+O_x(m^{-1/2}n^{-1})$ \\ Optimal: $O_x(n^{-1})$}
            & \begin{tabular}{c}\makecell{$O(n^{-1})+O(m^{-2})+O(m^{-1/2}n^{-1})$ \\ Optimal: $O(n^{-1})$\footnote{Note that the MISE here is defined differently with weight function $e^{-ax}$}}\end{tabular}
            \\\hline

            %\begin{tabular}{c}\makecell{Hermite\\ Half}\end{tabular}
            %&
            \makecell{Zero at $0$ \\ $x\left[O\left(\frac{N^{1/2}}{n}\right)+O\left(N^{-r}\right)\right]$ \\ Optimal: $xO(n^{\frac{-2r}{2r+1}})$ \footnoteref{HalfBias}}%\footnote{For $\left(x-\frac{d}{\diff x}\right)^rf\in L_2, r\geq 1, \E\left[|X|^{2/3}\right]<\infty$}
            & \begin{tabular}{c}\makecell{$\mu\left[O\left(\frac{N^{1/2}}{n}\right)+O\left(N^{-r}\right)\right]$ \\ Optimal: $\mu O(n^{-\frac{2r}{2r+1}})$\footnote{For $\left(x-\frac{d}{\diff x}\right)^rf\in L_2, r\geq 1, \mu=\int_0^{\infty} xf(x)dx<\infty$}}\end{tabular}
            \\\hline

            %\begin{tabular}{c}\makecell{Hermite\\ Full}\end{tabular}
            %&
            \makecell{$O\left(\frac{N^{5/2}}{n}\right)+O\left(N^{-r+2}\right)$ \\ Optimal: $O(n^{-\frac{2(r-2)}{2r+1}})$\footnote{\label{FullMSE}For $\left(x-\frac{d}{\diff x}\right)^rf\in L_2, r>2, \E\left[|X|^{2/3}\right]<\infty$}}
            & \begin{tabular}{c}\makecell{$O\left(\frac{N^{5/2}}{n}\right)+O\left(N^{-r+2}\right)$ \\ Optimal: $O\left(n^{-\frac{2(r-2)}{2r+1}}\right)$\footnoteref{FullMSE}}\end{tabular}%\footnote{For $\left(x-\frac{d}{\diff x}\right)^rf\in L_2, r>2, \E\left[|X|^{2/3}\right]<\infty$}
            \\\hline
        \end{tabular}
\end{table}
\end{savenotes}

%\newpage
%\section{Simulation on $[0,\infty)$}
\section{Simulation}
\label{Section:Simulation}

In this section, the different estimators are compared in a simulation study with respect to the MISE quality.

For the kernel distribution function estimator, the Gaussian kernel is chosen, i.e. $F_{h,n}(x)=\frac{1}{n}\sum_{i=1}^n\Phi\left(\frac{x-X_i}{h}\right)$, where $\Phi$ is the standard normal distribution function.

The simulation consists of two parts. In the first part, the estimators are compared by their MISE on $[0,\infty)$ with respect to
\begin{equation*}
    \MISE\left[\hat{F}_n\right]
    =\E\left[\int_0^{\infty}\left(\hat{F}_n(x)-F(x)\right)^2\cdot f(x)\diff x\right],
\end{equation*}
where $\hat{F}_n$ can be any of the considered estimators. In the second part, the asymptotic normality of the estimators is illustrated for one distribution.
The details for each part as well as the most important results are explained later.

All of the estimators except for the EDF have a parameter in addition to $n$. For these estimators, the MISE is calculated for a range of the parameters, which are given in \autoref{Table_Parameters}.

\begin{table}
    \centering
    \caption{The range of the respective parameters.}
    \label{Table_Parameters}
    \begin{tabular}{|c|c|c|}
        \hline
        Estimator & Abbr. & Parameters \\ \hline\hline
        EDF & $F_n$ & - \\ \hline
        Kernel & $F_{h,n}$ & $h=i/1000$, $i \in [2,200]$ \\ \hline
        %Bernstein & $\hat{F}_{m,n}$ & $m \in [2,200]$ \\ \hline
        Szasz & $\hat{F}_{m,n}^S$ & $m \in [2,200]$ \\ \hline
        Hermite Half & $\hat{F}_{N,n}^H$ & $N \in [2,60]$ \\ \hline
        %Hermite Full & $\hat{F}_{N,n}^F$ & $N \in [2,60]$ \\ \hline
    \end{tabular}
\end{table}

We obtain a vector of MISE values for each estimator. Searching for the minimum value in this vector provides the minimal MISE and the respective optimal parameter.
%The different sample sizes that are used are $n=20,50, 100$, and $500$.

Note that a selection of $m$ could be based on $m_{opt }$, defined in \autoref{Corollary:MISESzasz}, using ideas from automatic bandwith selection in kernel density estimation. Rule-of-thumb selectors replace the unknown density and distribution function with a reference distribution, for example the exponential distribution in our case. For plug-in selectors, the unknown quantities are estimated using pilot values of $m$. However, the analysis of such proposals is clearly far beyond the scope of this work.

Every MISE is calculated by a Monte-Carlo simulation with $M=10\,000$ repetitions. To be specific, let
\begin{equation*}
    \text{ISE}\left[\hat{F}_n\right]=\int_0^\infty\left[\hat{F}_n(x)-F(x)\right]^2 \cdot f(x) \diff x,
\end{equation*}
and with $M$ pseudo-random samples, the estimate of the MISE is calculated by
\begin{equation*}
    \MISE\left[\hat{F}_n\right]\simeq \frac{1}{M}\sum_{i=1}^M \text{ISE}_i[\hat{F}],
\end{equation*}
where $\text{ISE}_i$ is the integrated squared error calculated from the $i$th randomly generated sample.

For the Hermite estimator, the standardization explained in \cite{hanebeckNonparametricDistributionFunction2020} is used. In this simulation, we do not estimate the mean $\mu$ and the standard deviation $\sigma$ as we already know the true parameters.

\subsection{Comparison of the estimators}

For comparison, the exponential distribution with parameter $\lambda=2$ is chosen as well as three different Weibull mixture distributions. The bi- and trimodal mixtures that are considered are:
\begin{align*}
  \text{Weibull 1: } & 0.5 \cdot Weibull(1,1)+0.5 \cdot Weibull(4,4)\\
  \text{Weibull 2: } & 0.5 \cdot Weibull(3/2, 3/2)+0.5 \cdot Weibull(5,5)\\
  \text{Weibull 3: } & 0.35 \cdot Weibull(3/2,3/2)+0.35\cdot Weibull(4.5,4.5)+0.3 \cdot Weibull(8,8).
\end{align*}
Their densities are displayed in \autoref{WeibullDensities}.
%This distribution fulfills the assumption for the Szasz estimator and the Hermite estimator.
Of course, the comparison of the estimators on $[0,\infty)$ means that the Bernstein estimator cannot be used. Likewise, we omit the Hermite estimator on the real line.
%As for the comparison on the unit interval
%[STEPHANOU2020]Likewise, we omit the Hermite estimator on the real line.

For the exponential distribution, the different sample sizes that are used are $n=20,50, 100$, and $500$. For the Weibull distributions, only $n=50$ and $n=200$ are considered.

\SingleFigureHere{0.87\textwidth}{WeibullDensities}{Density plots of the three Weibull mixtures.}{WeibullDensities}
%\SingleFigureHere{0.87\textwidth}{WeibullDensities1}{}{WeibullDensities1}
%\SingleFigureHere{0.5\textwidth}{WeibullDensities2}{Density plots of the three Weibull mixtures.}{WeibullDensities2}

\DoubleFigureHere{HalfPlot}{Plot of the considered estimators for $n=20$ and $n=500$.}
An example of the different estimators can be seen in \autoref{HalfPlot} for $n=20$ and $n=500$. It is obvious that the Hermite estimators do not approach one, which is due to the truncation.

\DoubleFigureHere{HalfMISE1}{MISE over the respective parameters in $[2,200]$ for $n=20$ and $n=500$ in the case of the exponential distribution.}
%As in the case of the unit interval,
The Szasz estimator designed for the $[0,\infty)$-interval behaves best with respect to MISE. This can be seen in \autoref{HalfMISE1} for the exponential distribution. The minimal MISE-value of the Szasz estimator is always lower than that of the other estimators, also for the cases $n=50$ and $n=100$ that are not shown here.

%\newpage
\DoubleFigureHere{HalfMISE2}{MISE over the respective parameters in $[2,60]$ for $n=20$ and $n=500$ in the case of the exponential distribution.}

\autoref{HalfMISE2} makes clear that the standardization of the Hermite estimator %works better here than in the part for the unit interval,
yields a clear improvement over the nonstandardized estimator in the case of the exponential distribution, even for small sample sizes.
%This could be due to the fact that scaling makes a bigger difference in this case than in the unit interval.
%Note that even though the estimator gets better through standardization, the Hermite estimator is still worse than all of the other estimators (except the EDF) with respect to MISE.

% latex table generated in R 3.6.1 by xtable 1.8-4 package
% Thu Mar 12 11:35:33 2020
\begin{table}[ht]
  \centering
  \caption{The $\MISE \cdot 10^{-3}$-values for the interval $[0,\infty)$.}
  \label{Table_Second_Part}
  \begin{tabular}{|c|c|ccccc|}
    \hline
     & n & EDF & Kernel & Szasz & Hermite Half & Hermite Norm. \\ 
    \hline
    Exponential(2) & 20 & 8.29 & 6.09 & 5.3 & 8.68 & 7.57 \\ 
     & 50 & 3.3 & 2.71 & 2.41 & 5.61 & 3.58 \\ 
     & 100 & 1.68 & 1.47 & 1.32 & 4.6 & 2.26 \\ 
     & 500 & 0.34 & 0.32 & 0.3 & 3.73 & 1.15 \\ 
    \hline
    Weibull 1 & 50 & 3.32 & 2.92 & 2.55 & 3.26 & 3.45 \\ 
     & 200 & 0.83 & 0.76 & 0.71 & 0.99 & 1.33 \\ 
    \hline
    Weibull 2 & 50 & 3.32 & 2.96 & 2.59 & 3.08 & 2.76 \\ 
     & 200 & 0.83 & 0.75 & 0.72 & 0.79 & 0.79 \\
    \hline
    Weibull 3 & 50 & 3.36 & 3.11 & 2.55 & 3.26 & 2.91 \\ 
     & 200 & 0.83 & 0.77 & 0.73 & 0.81 & 0.8  \\
    \hline
  \end{tabular}
\end{table}

\autoref{Table_Second_Part} shows all the $\MISE \cdot 10^{-3}$-numbers of the optimal MISE for the considered estimators. The properties explained above for the exponential distribution can be found here as well. In the case of the Weibull distribution, the Szasz estimator also behaves the best in all cases.

\newpage
\subsection{Illustration of the Asymptotic Normality}
The goal here is to illustrate the asymptotic normality
\begin{equation*}
%    \sqrt{n}\left(\hat{F}_{n}(x)-\E\left[\hat{F}_{n}(x)\right]\right)
\sqrt{n}\left(\hat{F}_{n}(x)-F(x)\right) \xrightarrow{D} \mathcal{N}\left(0,\sigma^2(x)\right)
\end{equation*}
of the different estimators, where $\hat{F}_{n}$ can be any of the estimators. The expression can be rewritten as
\begin{equation*}
\hat{F}_n(x) \sim \mathcal{AN}\left(F(x),\frac{\sigma^2(x)}{n}\right).
\end{equation*}
This representation is used in the plots below for a $\Beta(3,3)$-distribution in the point $x=0.4$ for $n=500$. The value is $F(0.4)=0.32$.
In \autoref{Asymptotic}, the result can be seen. The red line in the plot shows the distribution function of the normal distribution. Furthermore, the histogram of the value $p=\hat{F}_n(0.4)$ is illustrated. The parameters used for the estimators are derived from the optimal parameters calculated in the simulation.

\FourFigureVeranschaulichungHermite{Asymptotic}{Illustration of the asymptotic normal distribution.}

\newpage
\section{Conclusions}
\label{Section:Conclusions}

Surprisingly, there is not much literature on nonparametric smooth distribution function estimators especially tailored to distributions on the positive real half line. This important case occurs in many applications where the data can only be positive but does not have an upper bound, such as prices, losses, biometric data and much more.

In this article, we have introduced an estimator for distribution functions on $[0,\infty)$ based on Szasz-Mirakyan operators. We have shown that the Szasz estimator compares very well with other important estimators such as the kernel estimator in theoretical comparisons and in a simulation. Especially on the matching interval $[0,\infty)$, the simulation study shows a clear advantage of the Szasz estimator with respect to the MISE quality.

\section*{Acknowledgements}
The authors are grateful to Frédéric Ouimet for pointing out an error in a previous version of \autoref{Lemma:EigenschaftenLRS}, for helpful discussions and for sharing his preprint \cite{ouimetLocalLimitTheorem2020}. 

\section*{Appendix} %\label{Section:Appendix}

\subsection*{Limit Theorem}

The following theorem can be found in \cite{ouimetLocalLimitTheorem2020}. He pointed out a mistake in the paper of \cite{leblancEstimatingDistributionFunctions2012} which also has an impact on this paper. The asymptotic behavior of $R_{1,m}^S$ in \autoref{Lemma:EigenschaftenLRS} has been corrected compared to Lemma 3 in \cite{hanebeckSmoothDistributionFunction2020}, arXiv v.1. This results in a slightly different definition of $V^S$ defined in \autoref{Theorem:BiasVarS}.
\begin{Theorem}
  \label{Lemma:LimitTheorem}
  We define
  \begin{equation*}
    V_{k,m}(x)=\frac{(mx)^k}{k!}e^{-mx}, \; \phi(x)=\frac{1}{\sqrt{2\pi}}e^{-x^2/2}, \; and \; \delta_k=\frac{k-mx}{\sqrt{mx}}.
  \end{equation*}
  Pick any $\eta \in (0,1)$. Then, we have uniformly for $k \in \N_0$ with $\left|\frac{\delta_k}{\sqrt{mx}}\right|\leq \eta$ that
  \begin{align*}
    \frac{V_{k,m}(x)}{\frac{1}{\sqrt{mx}}\phi(\delta_k)}&=1+m^{-1/2}\frac{1}{\sqrt{x}}\left(\frac{1}{6}\delta_k^3-\frac{1}{2}\delta_k\right)\\
    &+m^{-1}\frac{1}{x}\left(\frac{1}{72}\delta_k^6-\frac{1}{6}\delta_k^4+\frac{3}{8}\delta_k^2-\frac{1}{12}\right)+O_{x,\eta}\left(\frac{|1+\delta_k|^9}{m^{3/2}}\right)
  \end{align*}
  as $n \rightarrow \infty$.
\end{Theorem}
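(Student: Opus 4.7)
My plan is to take logarithms, Taylor-expand in $z := \delta_k/\sqrt{mx}$ (the hypothesis $|z|\le\eta<1$ keeps us inside the radius of convergence of all the required series), and then re-exponentiate, collecting powers of $m^{-1/2}$. I start with the refined Stirling formula
\begin{equation*}
k!=\sqrt{2\pi k}\,(k/e)^{k}\bigl(1+\tfrac{1}{12k}+O(k^{-2})\bigr),
\end{equation*}
and with $k=mx(1+z)$ rewrite the ratio as
\begin{equation*}
\frac{V_{k,m}(x)}{\phi(\delta_k)/\sqrt{mx}}=(1+z)^{-1/2}\cdot\exp\bigl[-mx(1+z)\log(1+z)+mxz+\tfrac12\delta_k^2\bigr]\cdot r_k^{-1},
\end{equation*}
where $r_k^{-1}=1-\tfrac{1}{12mx}+O_x(m^{-3/2})$.

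Taking logarithms turns the product into three additive expansions in $z$. Using the identity $(1+z)\log(1+z)=z+\sum_{n\ge2}(-1)^{n}z^{n}/(n(n-1))$, the $n=1,2$ contributions exactly cancel the $mxz+\tfrac12\delta_k^2$ piece, and the substitution $z=\delta_k/\sqrt{mx}$ turns the $n$-th remaining term into a contribution of order $\delta_k^n/m^{(n-2)/2}$. Together with $-\tfrac12\log(1+z)=-\delta_k/(2\sqrt{mx})+\delta_k^2/(4mx)+O_\eta(|\delta_k|^3/m^{3/2})$ and $-\log r_k=-1/(12mx)+O_x(m^{-3/2})$, collecting terms yields
\begin{equation*}
L:=\log\!\left(\tfrac{V_{k,m}(x)\sqrt{mx}}{\phi(\delta_k)}\right)=m^{-1/2}L_1(\delta_k)+m^{-1}L_2(\delta_k)+O_{x,\eta}\!\bigl(|1+\delta_k|^5/m^{3/2}\bigr),
\end{equation*}
with $L_1(\delta_k)=(\delta_k^3/6-\delta_k/2)/\sqrt{x}$ and $L_2(\delta_k)=(\delta_k^2/4-\delta_k^4/12-1/12)/x$.

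Re-exponentiating via $e^L=1+L+L^2/2+L^3/6+\cdots$ gives the stated expansion. At order $m^{-1/2}$ only $L_1$ survives, matching the claim. At order $m^{-1}$ one collects $L_2+L_1^2/2$; since $L_1^2=(\delta_k^6/36-\delta_k^4/6+\delta_k^2/4)/x$, the sum simplifies to
\begin{equation*}
\frac{1}{x}\Bigl(\frac{\delta_k^6}{72}-\frac{\delta_k^4}{6}+\frac{3\delta_k^2}{8}-\frac{1}{12}\Bigr),
\end{equation*}
which is the stated $m^{-1}$ coefficient.

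The main obstacle is the uniform polynomial envelope in the remainder. Each truncation of the logarithmic expansion at order $n$ contributes a pointwise error $O_\eta(|\delta_k|^n/m^{n/2})$, and at the re-exponentiation step the largest monomial that can appear at order $m^{-3/2}$ is the cubic combination $L_1^3/6$, whose top term is $\delta_k^9/(216\,x^{3/2})$; together with $L_1L_2\sim\delta_k^7$ from $L^2$ and the $\delta_k^5$ direct contribution from $L$, the envelope $|\delta_k|^j\le(1+|\delta_k|)^9$ for $0\le j\le9$ consolidates all remainders into $O_{x,\eta}(|1+\delta_k|^9/m^{3/2})$. The delicate step is that $L$ itself is not uniformly small across $|z|\le\eta$, so the naive estimate $|e^L-\sum_{j\le2}L^j/j!|\le C|L|^3$ cannot be applied directly; one replaces it with the integral form of Taylor's theorem together with a priori control on the ratio coming from the crude Poisson LCLT, which is the part of the argument where one has to work with care.
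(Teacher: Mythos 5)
First, a point of comparison: the paper does not prove this statement at all — it is imported verbatim from the cited preprint of Ouimet — so there is no in-paper proof to measure yours against. Your route (Stirling's formula for $k!$, taking logarithms, Taylor expansion in $z=\delta_k/\sqrt{mx}$, re-exponentiation) is the standard one for Poisson local limit expansions, and your coefficient bookkeeping is correct: one does get $L_1=(\delta_k^3/6-\delta_k/2)/\sqrt{x}$ and $L_2=(\delta_k^2/4-\delta_k^4/12-1/12)/x$, and $L_2+L_1^2/2$ reduces exactly to the stated $m^{-1}$ coefficient.

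The gap is the one you flag in your last sentence, and it is not merely a step "where one has to work with care": as described, it cannot be closed on the full range $|z|\le\eta$. On that range $L$ is not $O(1)$. For $z$ of constant order one has
\begin{equation*}
L=mx\bigl(\tfrac{1}{6}z^3-\tfrac{1}{12}z^4+\tfrac{1}{20}z^5-\cdots\bigr)+O_{x,\eta}(1),
\end{equation*}
which grows linearly in $m$ and is strictly positive for $z\in(0,\eta]$. Hence for $k\approx(1+\eta)mx$ the ratio $V_{k,m}(x)\sqrt{mx}/\phi(\delta_k)$ is of exact exponential order $e^{c(\eta)mx}$ with $c(\eta)>0$ (this is the Cram\'er large-deviation correction to the local CLT), whereas the displayed main terms and the error envelope $|1+\delta_k|^9m^{-3/2}\asymp m^{3}x^{9/2}$ are only polynomial in $m$. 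No integral-form Taylor remainder and no a priori control from a cruder local CLT can bound an exponentially large discrepancy by a polynomial one, so the proposed repair of the estimate $|e^L-\sum_{j\le 2}L^j/j!|\le C|L|^3$ cannot succeed there. What your argument actually establishes is the expansion uniformly on the range where $L$ stays bounded, i.e.\ $|\delta_k|=O_{x,\eta}((mx)^{1/6})$; on that range $e^L=1+L+L^2/2+O_\eta(|L|^3)$ is legitimate and the envelope $(1+|\delta_k|)^9m^{-3/2}$ absorbs $L_1^3$, $L_1L_2$ and the direct $m^{-3/2}$ remainder of $L$ exactly as you describe. You should therefore either restrict the stated range accordingly — which suffices for every use of the theorem in this paper, since the Gaussian factor $\phi(\delta_k)$ in parts (d), (e) and (h) of \autoref{Lemma:EigenschaftenLRS} confines the relevant indices to $|\delta_k|=O(\sqrt{\log m})$ — or consult the exact hypotheses in the cited reference; as reproduced here, the claim and your proof strategy are incompatible near the boundary of the range $|z|\le\eta$.
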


\subsection*{Properties of $V_{k,m}$}
  We now present various properties of $V_{k,m}$ that are needed for the proofs. The following lemma and its proof are similar to Lemma 2 and Lemma 3 in \cite{leblancEstimatingDistributionFunctions2012}. As mentioned before, parts (e) and (h) take suggestions in \cite{ouimetLocalLimitTheorem2020} into account. The proofs for these parts are adjusted accordingly.
\begin{Lemma}
    \label{Lemma:EigenschaftenLRS}
    Define
    \begin{equation*}
        L_m^S(x)=\sum_{k=0}^{\infty}V_{k,m}^2(x),
    \end{equation*}
    \begin{equation*}
        R^S_{j,m}(x)=m^{-j}\mathop{\sum\sum}_{0\leq k < l \leq \infty}(k-mx)^jV_{k,m}(x)V_{l,m}(x)
    \end{equation*}
    and
    \begin{equation*}
      \tilde{R}_{1,m}^S(x)=m^{1/2}\sum_{k,l=0}^{\infty}\left(\frac{k\wedge l}{m}-x\right)V_{k,m}(x)V_{l,m}(x)
    \end{equation*}
    for $j\in \{0,1,2\}$, and $V_{k,m}(x)=e^{-mx}\frac{(mx)^k}{k!}$. It trivially holds that $0 \leq L_m^S(x) \leq 1$ for $x\in[0,\infty)$.
    In addition, the following properties hold.
    Let $g$ be a continuous and bounded function on $[0,\infty)$. This leads to
    \begin{enumerate}
    \item[(a)] $L_m^S(0)=1$ and $\displaystyle\lim_{x\rightarrow \infty} L_m^S(x)=0$,
    \item[(b)] $R_{j,m}^S(0)=0$ for $j\in\{0,1,2\}$,
    \item[(c)] $0 \leq R_{2,m}^S(x) \leq \frac{x}{m} \enspace \text{for} \enspace x \in (0,\infty)$,
    \item[(d)] $L_m^S(x)=m^{-1/2}\left[(4\pi x)^{-1/2}+o_x(1)\right] \enspace \text{for} \enspace x\in (0,\infty)$,
    \item[(e)] $\tilde{R}_{1,m}^S(x)= -\sqrt{\frac{x}{\pi}}+o_x(1) \enspace \text{for} \enspace x\in (0,\infty)$ and $R_{1,m}^S(x)=m^{-1/2}\left[-\sqrt{\frac{x}{4\pi}}+o_x(1)\right]$,
    \item[(f)] $m^{1/2} \displaystyle\int_0^{\infty} L_m^S(x)e^{-ax}\diff x
        =\displaystyle\int_0^{\infty}(4\pi x)^{-1/2}e^{-ax}\diff x+o(1)=\frac{1}{2\sqrt{a}}+o(1)$ for $a \in (0,\infty)$,
    \item[(g)] $m^{1/2} \displaystyle\int_0^{\infty} x L_m^S(x)e^{-ax}\diff x
        =\displaystyle\int_0^{\infty}x^{1/2}(4\pi)^{-1/2}e^{-ax}\diff x+o(1)=\frac{1}{4a^{3/2}}+o(1)$ for $a \in (0,\infty)$,
    \item[(h)] $m^{1/2} \displaystyle\int_0^{\infty} g(x)R_{1,m}^S(x)e^{-ax}\diff x
    = -\displaystyle\int_0^{\infty} g(x)\frac{\sqrt{x}}{\sqrt{4\pi}}e^{-ax}\diff x+o(1)$ for $a \in (0,\infty)$ \\ and $\displaystyle\int_0^{\infty} g(x)\tilde{R}_{1,m}^S(x)e^{-ax}\diff x
    = -\displaystyle\int_0^{\infty} g(x)\frac{\sqrt{x}}{\sqrt{\pi}}e^{-ax}\diff x+o(1)$.
    \end{enumerate}
\end{Lemma}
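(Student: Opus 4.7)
The plan is to split the eight parts into three groups of increasing difficulty. Parts (a)--(c) follow from direct evaluation of the Poisson mass function and elementary Poisson moment identities. For (a), $V_{k,m}(0)=\delta_{k,0}$ gives $L_m^S(0)=1$, and for fixed $m$ each summand $V_{k,m}(x)^2$ tends to $0$ as $x\to\infty$ while $\sum_k V_{k,m}(x)=1$ forces $\max_k V_{k,m}(x)\to 0$, so $L_m^S(x)\to 0$. Part (b) is the same observation at $x=0$: the constraint $k<l$ together with $V_{k,m}(0)=\delta_{k,0}$ makes every summand vanish. For (c), I start from $\sum_{k,l}(k-mx)^2 V_{k,m}(x)V_{l,m}(x)=\Var[\Poi(mx)]=mx$ and split the double sum into $k<l$, $k=l$, $k>l$; since the last two pieces are nonnegative, the $k<l$ piece lies in $[0,mx]$, and dividing by $m^2$ yields the claim.

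Parts (d) and (e) are the technical core, which I reduce to Riemann-sum approximations of Gaussian integrals via the local-limit-theorem expansion of \autoref{Lemma:LimitTheorem}. For (d), substituting the expansion into $\sum_k V_{k,m}(x)^2$ gives the leading contribution $\frac{1}{mx}\sum_k\phi(\delta_k)^2$; this is a Riemann sum with step $1/\sqrt{mx}$ converging to $\frac{1}{\sqrt{mx}}\int\phi^2=(4\pi mx)^{-1/2}$, while the subleading terms in the expansion contribute only $o_x(m^{-1/2})$. For (e), I use the identity $k\wedge l=\frac{1}{2}(k+l-|k-l|)$; the $k+l$ contributions vanish because $\sum_k(k/m-x)V_{k,m}(x)=0$, leaving $\tilde{R}_{1,m}^S(x)=-\frac{1}{2\sqrt{m}}\,\E|Y_1-Y_2|$ where $Y_1,Y_2$ are independent $\Poi(mx)$. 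A central limit argument with uniform integrability (the difference has variance $2mx$ and bounded third moments) gives $\E|Y_1-Y_2|/\sqrt{2mx}\to\sqrt{2/\pi}$, hence $\tilde{R}_{1,m}^S(x)\to -\sqrt{x/\pi}$. For $R_{1,m}^S$ I exploit the algebraic identity $\tilde{R}_{1,m}^S(x)=2m^{1/2}R_{1,m}^S(x)+m^{-1/2}\sum_k(k-mx)V_{k,m}(x)^2$, obtained by splitting the symmetric double sum into $<,=,>$; a further local-limit computation shows the diagonal correction is $o(1)$, leaving $R_{1,m}^S(x)=m^{-1/2}[-\sqrt{x/(4\pi)}+o_x(1)]$.

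Parts (f)--(h) then combine these pointwise asymptotics with dominated convergence to pass the limit inside the $x$-integral; the resulting integrands are $(4\pi x)^{-1/2}e^{-ax}$, $\sqrt{x/(4\pi)}\,e^{-ax}$, and $-g(x)\sqrt{x/(4\pi)}\,e^{-ax}$ (plus the $\tilde{R}$ counterpart), and the closed forms $1/(2\sqrt{a})$ and $1/(4a^{3/2})$ follow from $\int_0^\infty x^{s-1}e^{-ax}\diff x=a^{-s}\Gamma(s)$ with $s=1/2$ and $s=3/2$. The main obstacle is the justification of dominated convergence, since the pointwise limit $(4\pi x)^{-1/2}$ blows up at $0$: I need a uniform-in-$m$ bound such as $m^{1/2}L_m^S(x)\le C\min(\sqrt{m},x^{-1/2})$ (and analogous bounds for the $R$-terms), whose contribution on the exceptional interval $(0,1/m)$ vanishes like $m^{-1/2}$. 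Establishing these uniform boundary bounds, together with the delicate cancellation inside $\tilde{R}_{1,m}^S$ that produces exactly the constant $-\sqrt{x/\pi}$ rather than a higher-order error, will be the technically most demanding step.
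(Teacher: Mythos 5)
Your proposal is correct in substance for all eight parts, but it reaches (d), (e), (f) and (g) by genuinely different routes than the paper. For (d), you substitute the local limit expansion directly into $\sum_k V_{k,m}^2(x)$ and read it off as a Riemann sum for $\frac{1}{\sqrt{mx}}\int\phi^2=(4\pi mx)^{-1/2}$; the paper instead writes $\sum_k V_{k,m}^2(x)=\P\bigl(\sum_{i=1}^m U_i=\sum_{i=1}^m W_i\bigr)$ for i.i.d.\ $\Poi(x)$ arrays and invokes Feller's local limit theorem for the lattice variable $(U_1-W_1)/\sqrt{2x}$ -- your version must additionally dispose of the indices with $|\delta_k/\sqrt{mx}|>\eta$ outside the validity range of the expansion, which is routine Poisson tail control but should be said. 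For (e), your identity $k\wedge l=\tfrac12(k+l-|k-l|)$ collapses $\tilde{R}_{1,m}^S$ to $-\tfrac{1}{2\sqrt m}\E|Y_1-Y_2|$ with $Y_1,Y_2$ i.i.d.\ $\Poi(mx)$, and the CLT with $L^2$-boundedness (hence uniform integrability) gives $-\sqrt{x/\pi}$ directly; this is cleaner than the paper's argument, which keeps the $k<l$ sum, approximates it by $2x\int z/x\,\phi_x(z)\int_z^\infty\phi_x(y)\,\diff y\,\diff z$ and integrates by parts. Both you and the paper pass from $\tilde{R}$ to $R_{1,m}^S$ by the same diagonal split. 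The real divergence is in (f) and (g): the paper computes $\int_0^\infty V_{k,m}^2(x)e^{-ax}\diff x$ in closed form and sums the central binomial series to get $\sqrt{m/(a(a+4m))}$ exactly, so no interchange of limit and integral is ever needed there; you instead invoke dominated convergence, which requires the uniform bound $L_m^S(x)\le\max_k V_{k,m}(x)\le C(mx)^{-1/2}$ (provable via Stirling, $k!\ge\sqrt{2\pi k}(k/e)^k$ together with $k\ln(mx/k)+k-mx\le0$), a step you correctly flag but do not carry out. For (h) your dominated-convergence argument coincides with the paper's, which dominates $m^{1/2}|R_{1,m}^S(x)|e^{-ax}$ by $\sqrt{x}e^{-ax}$ via Cauchy--Schwarz. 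In short: your (e) is more elegant, the paper's (f)--(g) is more self-contained; fill in the Poisson maximal bound and the tail truncation in (d) and your proof is complete.
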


\begin{proof}
%    The proof is similar to the proof of \cite[Lemma 2 and Lemma 3]{leblancEstimatingDistributionFunctions2012}.
  \begin{enumerate}
    \item[(a)]
    $L_m^S(0)=1$ is clear. Using the mode of the poisson distribution it holds for the limit that
    \begin{equation*}
        \lim_{x \rightarrow \infty} L_m^S(x)  \leq \lim_{x \rightarrow \infty} \max_k V_{k,m} \sum_{k=0}^{\infty} V_{k,m}  =  \lim_{x \rightarrow \infty} P(Y = \lfloor mx \rfloor)) = 0,
    \end{equation*}
    where $Y \sim \Poi(mx)$.

    \item[(b)]
    $R_{j,m}^S(0)=0$ holds trivially.

    \item[(c)]
    The non-negativity is clear. For the other inequality, it holds that
    \begin{align*}
        R_{2,m}^S(x) &\leq m^{-2}\sum_{k=0}^{\infty}\sum_{l=0}^{\infty}(k-mx)^2V_{k,m}(x)V_{l,m}(x)\nonumber\\
        &=m^{-2}\sum_{k=0}^{\infty}(k-mx)^2V_{k,m}(x)=m^{-2}\Var[Y]=\frac{x}{m},
    \end{align*}
    where $Y \sim \Poi(mx)$.

    \item[(d)]
    Let $U_i,W_j, i,j\in\{1,...,m\}$, be i.i.d. random variables with distribution $\Poi(x)$, hence,
    \begin{equation*}
        \P(U_1=k)=e^{-x}\frac{x^k}{k!}.
    \end{equation*}
    Define $R_i=(U_i-W_i)/\sqrt{2x}$.
    Then, we know that $\E[R_i]=0, \Var[R_i]=1$ and $R_i$ has a lattice distribution with span
        $h=1/\sqrt{2x}$.
    Note that with the independence it holds that
    \begin{align*}
        \P\left(\sum_{i=1}^{m}R_i=0\right)
        &=\P\left(\sum_{i=1}^{m}U_i=\sum_{i=1}^{m}W_i\right)\\
        &=\sum_{k=0}^{\infty} \P\left(\sum_{i=1}^{m}U_i=\sum_{i=1}^{m}W_i=k\right) \\
        &=\sum_{k=0}^{\infty}V_{k,m}^2(x).
    \end{align*}
    With Theorem 3 on p. 517 in \cite{fellerIntroductionProbabilityTheory1965}, we get that
    \begin{equation*}
        \frac{\sqrt{m}}{h}\sum_{k=0}^{\infty}V_{k,m}^2(x)-\frac{1}{\sqrt{2\pi}}\rightarrow 0
    \end{equation*}
    and it follows that
    \begin{equation*}
        \sqrt{4\pi mx}\sum_{k=0}^{\infty}V_{k,m}^2(x) \rightarrow 1,
    \end{equation*}
    from which the claim follows.

    \item[(e)] This proof is a part of the proof of Lemma 3.6 in \cite{ouimetLocalLimitTheorem2020}. We consider the decomposition
    \begin{equation}
      \label{Eq:Decomposition}
      \tilde{R}_{1,m}^S(x)=2m^{1/2}\sum_{0\leq k < l < \infty}\left(\frac{k}{m}-x\right)V_{k,m}(x)V_{l,m}(x)+m^{1/2}\sum_{k=0}^{\infty}\left(\frac{k}{m}-x\right)V_{k,m}^2.
    \end{equation}
    The second term on the right-hand side of Eq.(\ref{Eq:Decomposition}) is negligible as we know with the Cauchy-Schwarz inequality that
    \begin{align*}
      \sum_{k=0}^{\infty}\left(\frac{k}{m}-x\right)V_{k,m}^2(x)&\leq \left[\sum_{k=0}^{\infty}\left(\frac{k}{m}-x\right)^2V_{k,m}(x)\right]^{1/2}\left[\sum_{k=0}^m V_{k,m}^3(x)\right]^{1/2}\nonumber\\
      &\leq \left[\frac{T_{2,m}^S}{m^2}L_m^S(x)\right]^{1/2}\leq \left[\frac{x}{m}L_m^S(x)\right]^{1/2}\nonumber\\
      &\leq \left[\frac{x}{m}m^{-1/2}\left[(4\pi x)^{-1/2}+o_x(1)\right]\right]^{1/2}=o_x(m^{-3/4}),
    \end{align*}
    where $T_{2,m}^S(x)=\displaystyle\sum_{k=0}^{\infty}\left(k-mx\right)^2V_{k,m}(x)=mx$ for $x\in[0,\infty)$.
    For the first term on the right-hand side of Eq.(\ref{Eq:Decomposition}), we use the local limit theorem \autoref{Lemma:LimitTheorem} and integration by parts.
    Let $\phi_{\sigma^2}$ denote the density function of the $\mathcal{N}(0,\sigma^2)$ distribution. Then
    \begin{align*}
      \tilde{R}_{1,m}^S(x)&=2x\int_{-\infty}^{\infty}\frac{z}{x}\phi_x(z)\int_z^{\infty}\phi_x(y)dydz+o_x(1)\\
      &=2x\left[0-\int_{-\infty}^{\infty}\phi_x^2(z)dz\right]+o_x(1)\\
      &=\frac{-2x}{\sqrt{4\pi x}}\int_{-\infty}^{\infty}\phi_{\frac{1}{2}x}(z)dz+o_x(1)\\
      &=-\sqrt{\frac{x}{\pi}}+o_x(1).
    \end{align*}
    This leads to
    \begin{align*}
      R_{1,m}^S(x)=m^{-1/2}\left[-\sqrt{\frac{x}{4\pi}}+o_x(1)\right].
    \end{align*}

    \item[(f)]
    The goal is to calculate
    \begin{align*}
        m^{1/2} \int_0^{\infty} L_m^S(x)e^{-ax}\diff x&=m^{1/2} \int_0^{\infty} \sum_{k=0}^{\infty} V_{k,m}^2(x) e^{-ax}\diff x\nonumber\\
        &=m^{1/2} \sum_{k=0}^{\infty}\int_0^{\infty} V_{k,m}^2(x) e^{-ax}\diff x.
    \end{align*}
    For the integral we know that
    \begin{align*}
        \int_0^{\infty} V_{k,m}^2(x) e^{-ax}\diff x&=\int_0^{\infty} \left(e^{-mx}\frac{(mx)^k}{k!}\right)^2 e^{-ax}\diff x\nonumber\\
        &=\frac{m^{2k}}{(k!)^2} \int_0^{\infty}x^{2k}e^{-(2m+a)x}\diff x\nonumber\\
        &=\frac{m^{2k}}{(k!)^2(2m+a)^{2k+1}} \int_0^{\infty}y^{2k}e^{-y}dy\nonumber\\
        &=\frac{m^{2k}}{(k!)^2(2m+a)^{2k+1}}\Gamma(2k+1).
    \end{align*}
    Calculating the sum leads to
    \begin{align*}
        &m^{1/2} \sum_{k=0}^{\infty} \frac{m^{2k}}{(k!)^2(2m+a)^{2k+1}}\Gamma(2k+1)\nonumber\\
        &=m^{1/2} \sum_{k=0}^{\infty} \left(\frac{1}{2m+a}\right)^{2k+1}m^{2k}\binom{2k}{k}\nonumber\\
        &=\sqrt{\frac{m}{a(a+4m)}}\nonumber\\
        &=\frac{1}{2\sqrt{a}}+\frac{1}{\sqrt{a}}\left[\sqrt{\frac{m}{a+4m}}-\frac{1}{2}\right]=\frac{1}{2\sqrt{a}}+o(1).
    \end{align*}
    It holds that
    \begin{equation*}
        \int_0^{\infty}(4\pi x)^{-1/2}e^{-ax}\diff x=\frac{1}{2\sqrt{a}}
    \end{equation*}
    and hence,
    \begin{equation*}
        m^{1/2} \int_0^{\infty} L_m^S(x)e^{-ax}\diff x=\frac{1}{2\sqrt{a}}+o(1)=\int_0^{\infty}(4\pi x)^{-1/2}e^{-ax}\diff x+o(1).
    \end{equation*}

    \item[(g)]
    Similar to (f), we get
    \begin{equation*}
        \int_0^{\infty} xV_{k,m}^2(x) e^{-ax}\diff x=\frac{m^{2k}}{(k!)^2(2m+a)^{2k+2}}\Gamma(2k+2),
    \end{equation*}
    leading to
    \begin{align*}
        &m^{1/2} \sum_{k=0}^{\infty} \frac{m^{2k}}{(k!)^2(2m+a)^{2k+2}}\Gamma(2k+2)=\frac{\sqrt{m}(a+2m)}{(a(a+4m))^{3/2}}\nonumber\\
        &=\frac{1}{4a^{3/2}}+\frac{1}{a^{3/2}}\left[\frac{\sqrt{m}(a+2m)}{(a+4m)^{3/2}}-\frac{1}{4}\right]=\frac{1}{4a^{3/2}}+o(1).
    \end{align*}

    \item[(h)]
    Define $G_m^S(x)=m^{1/2}R_{1,m}^S(x)e^{-ax}$ and $G^S(x)=-\frac{\sqrt{x}}{\sqrt{4\pi}}e^{-ax}$. Then with part (e) we know that  $G_m^S(x) \xrightarrow{m \rightarrow \infty} G^S(x)$.

    Note that
    \begin{align*}
        R_{1,m}(x)&=m^{-1}e^{-2mx}\mathop{\sum\sum}_{0\leq k < l \leq \infty}(k-mx)\frac{(mx)^{k+l}}{k! l!}\nonumber\\
        &=m^{-1}e^{-2mx}\sum_{k=0}^{\infty}(k-mx)\frac{(mx)^k}{k!}\left[\sum_{l=k+1}^{\infty}\frac{(mx)^{l}}{l!}\right]\nonumber\\
        &=m^{-1}e^{-mx}\sum_{k=0}^{\infty}(k-mx)\frac{(mx)^{k}}{k!}\left(1-\frac{\Gamma(1+k,mx)}{\Gamma(1+k)}\right)\nonumber\\
        &=-m^{-1}e^{-mx}\sum_{k=0}^{\infty}(k-mx)\frac{(mx)^{k}}{k!}\frac{\Gamma(1+k,mx)}{\Gamma(1+k)}.
    \end{align*}

    Using $\frac{\Gamma(1+k,mx)}{\Gamma(1+k)}=\P(Y \leq k)\in [0,1]$ for $Y \sim \Poi(mx)$, the above calculation yields
    \begin{align*}
        |G_m^S(x)|&\leq m^{-1/2}e^{-(a+m)x}\sum_{k=0}^{\infty}|k-mx|\frac{(mx)^{k}}{k!}\frac{\Gamma(1+k,mx)}{\Gamma(1+k)}\nonumber\\
        &\leq m^{-1/2}e^{-ax}\sum_{k=0}^{\infty}|k-mx|V_{k,m}(x)\nonumber\\
        &\leq m^{-1/2}e^{-ax}\left(\sum_{k=0}^{\infty}(k-mx)^2V_{k,m}(x)\right)^{1/2}\nonumber\\
        &=m^{-1/2}e^{-ax}\sqrt{mx}=\sqrt{x}e^{-ax}.
    \end{align*}
    This is integrable since
    \begin{equation*}
        \int_0^{\infty}\sqrt{x}e^{-ax}\diff x=\frac{\sqrt{\pi}}{2a^{3/2}}.
    \end{equation*}
    With the dominated convergence theorem it follows that
    \begin{equation*}
        \int_0^{\infty}|G_m^S(x)-G^S(x)|\diff x=o(1)
    \end{equation*}
    and
    \begin{align*}
        &\left|\int_0^{\infty}g(x)G_m^S(x)\diff x-\int_0^{\infty}g(x)G^S(x)\diff x\right|\nonumber\\
        &\leq \sup_{x\in[0,\infty)}|g(x)|\int_0^{\infty}|G_m^S(x)-G^S(x)|\diff x=o(1),
    \end{align*}
    as $g$ is bounded.

    The proof for $\tilde{R}_{1,m}^S$ is very similar with Eq.(\ref{Eq:Decomposition}) and the fact that the second term is negligible.
  \end{enumerate}
\end{proof}

\subsection*{Proofs}
\label{Section:ProofsSzasz}

\begin{proof}[Proof of \autoref{Theorem:BiasVarS}]
  The bias follows directly from \autoref{Lemma:LorentzS}. \noindent For the proof of the variance, some ideas are taken from \cite{ouimetLocalLimitTheorem2020} and \cite{leblancEstimatingDistributionFunctions2012}. Let
  \begin{equation*}
    Y_{i,m}^S=\sum_{k=0}^{\infty}\Delta_i\left(\frac{k}{m}\right)V_{k,m}(x),
  \end{equation*}
  where $\Delta_i(x)=\mathbb{I}(X_i \leq x)-F(x)$ for $x\in[0,\infty)$. We know that $\Delta_1,...,\Delta_n$ are i.i.d. with mean zero.
  Hence,
  \begin{align}
    \label{Eq:DarstellungSummeS}
    \hat{F}_{m,n}^S(x)-S_m(F;x) &=\sum_{k=0}^{\infty}\left[F_n\left(\frac{k}{m}\right)-F\left(\frac{k}{m}\right)\right]V_{k,m}(x)\nonumber\\
    &=\frac{1}{n}\sum_{k=0}^{\infty}\sum_{i=1}^n\left[\mathbb{I}\left(X_i\leq\frac{k}{m}\right)-F\left(\frac{k}{m}\right)\right]V_{k,m}(x)\nonumber\\
    &=\frac{1}{n}\sum_{i=1}^nY_{i,m}^S.
  \end{align}
  Note that $Y_{i,m}^S<\infty$ a.s. and that, for given $m$, $Y_{1,m}^S,...,Y_{n,m}^S$ are i.i.d. with mean zero.
  This means that the variance can be calculated by
  \begin{align}
    \label{Eq:VarYS}
    \Var\left[\hat{F}_{m,n}^S(x)\right]&=\Var\left[\hat{F}_{m,n}^S(x)-S_m(F;x)\right]=\frac{1}{n^2}\sum_{i=1}^n\Var[Y_{i,m}^S]\nonumber\\
    &=\frac{1}{n}\Var\left[Y_{1,m}^S\right]=\frac{1}{n}\E\left[\left(Y_{1,m}^S\right)^2\right].
  \end{align}
It also holds for $x,y \in [0,\infty)$ that
\begin{align*}
  \E[\Delta_1(x)\Delta_1(y)]&=\E[(\mathbb{I}(X_1 \leq x)-F(x))(\mathbb{I}(X_1 \leq y)-F(y))]\nonumber\\
  % &\textcolor{red}{=\E[\mathbb{I}(X_1 \leq x)\mathbb{I}(X_1 \leq y)]-F(x)F(y)}\nonumber\\
  % &\textcolor{red}{=\E[\mathbb{I}(X_1\leq \min(x,y))]-F(x)F(y)}\nonumber\\
  &=\min(F(x),F(y))-F(x)F(y),
\end{align*}
which implies
\begin{align}
  \label{Eq:EWYS}
  \E\left[\left(Y_{1,m}^S\right)^2\right]&=\sum_{k=0}^\infty\sum_{l=0}^\infty\E\left[\Delta_1\left(\frac{k}{m}\right)\Delta_1\left(\frac{l}{m}\right)\right]V_{k,m}(x)V_{l,m}(x)\nonumber\\
  &=\sum_{k=0}^m\sum_{l=0}^\infty\min\left(F\left(\frac{k}{m}\right), F\left(\frac{l}{m}\right)\right)V_{k,m}(x)V_{l,m}(x)\nonumber\\
  &\enspace-\sum_{k=0}^\infty\sum_{l=0}^\infty F\left(\frac{k}{m}\right)F\left(\frac{l}{m}\right)V_{k,m}(x)V_{l,m}(x)\nonumber\\
  &=\sum_{k=0}^\infty F\left(\frac{k}{m}\right)V_{k,m}^2(x)\nonumber\\
  &\enspace+2 \mathop{\sum\sum}_{0\leq k < l \leq \infty} F\left(\frac{k}{m}\right)V_{k,m}(x)V_{l,m}(x)-S_m^2(x)\nonumber\\
  &=\sum_{k,l=0}^{\infty}F\left(\frac{k \wedge l}{m}\right)V_{k,m}(x)V_{l,m}(x)-S_m^2(x)).
\end{align}
Use Taylor's theorem to get
\begin{equation*}
  F\left(\frac{k \wedge l}{m}\right)=F(x)+\left(\frac{k \wedge l}{m}-x\right)f(x)+O\left(\left(\frac{k \wedge l}{m}-x\right)^2\right).
\end{equation*}
With this, we get
\begin{align}
  \label{Eq:NeueDarstellung}
  \E\left[\left(Y_{1,m}^S\right)^2\right]&=F(x)+f(x)\sum_{k,l=0}^{\infty}\left(\frac{k \wedge l}{m}-x\right)V_{k,m}(x)V_{l,m}(x)\nonumber\\
  &\enspace +O\left(\sum_{k,l=0}^{\infty}\left|\frac{k}{m}-x\right|\left|\frac{l}{m}-x\right|V_{k,m}(x)V_{l,m}(x))\right)-S_m^2(x)\nonumber\\
  &=\sigma^2(x)+m^{-1/2}f(x)\tilde{R}_{1,m}^S+O_x(m^{-1})\nonumber\\
  &=\sigma^2(x)-f(x)m^{-1/2}\sqrt{\frac{x}{\pi}}+o_x(m^{-1/2}).
\end{align}
We used the fact that with the Cauchy-Schwarz-Inequality and \autoref{Eq:QuadratS} we get
\begin{align*}
  \sum_{k,l=0}^{\infty}\left|\frac{k}{m}-x\right|\left|\frac{l}{m}-x\right|V_{k,m}(x)V_{l,m}(x)) \leq m^{-2}\sum_{k=0}^{\infty}(k-mx)^2V_{k,m}(x)=\frac{x}{m}.
\end{align*}
This proves the claim.
\end{proof}

\begin{proof}[Proof of \autoref{Theorem:AsySzasz}]
    This proof follows the proof of Theorem 2 in \cite{leblancEstimatingDistributionFunctions2012}.
    For fixed $m$ we know from the proof of \autoref{Theorem:BiasVarS} that
    \begin{equation*}
        \hat{F}_{m,n}^S(x)-S_m(F;x) =\frac{1}{n}\sum_{i=1}^nY_{i,m}^S,
    \end{equation*}
    where the $Y_{i,m}^S$ are i.i.d. random variables with mean zero. Define $(\gamma_m^S)^2=\E[(Y_{1,m}^S)^2]$.
    We now use the central limit theorem for double arrays (see \cite{serflingApproximationTheoremsMathematical1980}, Section 1.9.3) to show the claim.

    Defining
    \begin{equation*}
        A_n=\E\left[\sum_{i=1}^nY_{i,m}^S\right]=0 \enspace \text{and} \enspace B_n^2=\Var\left[\sum_{i=1}^nY_{i,m}^S\right]=n \left(\gamma_m^S\right)^2,
    \end{equation*}
    it says that
    \begin{equation*}
        \frac{\sum_{i=1}^nY_{i,m}^S-A_n}{B_n} \xrightarrow{D} \mathcal{N}(0,1)
    \end{equation*}
    if and only if the Lindeberg condition
    \begin{equation*}
        \frac{n\E\left[\mathbb{I}(|Y_{1,m}^S|>\epsilon B_n)\left(Y_{1,m}^S\right)^2\right]}{B_n^2} \rightarrow 0 \enspace \text{for} \enspace n \rightarrow \infty \enspace \text{and all} \enspace \epsilon> 0
    \end{equation*}
    is satisfied.
    With \autoref{Eq:NeueDarstellung} we know that $\gamma_m^S \rightarrow \sigma(x)$ for $m \rightarrow \infty$ (which follows from $n \rightarrow \infty$) and it holds for $n \rightarrow \infty$ that
    \begin{align*}
        \frac{\sum_{i=1}^nY_{i,m}^S-A_n}{B_n} &\xrightarrow{D} \mathcal{N}(0,1)\nonumber\\
        \Leftrightarrow \frac{\sum_{i=1}^nY_{i,m}^S}{\sqrt{n}\cdot\gamma_m^S} &\xrightarrow{D} \mathcal{N}(0,1)\nonumber\\
        \Leftrightarrow \frac{\sqrt{n}}{\gamma_m^S} \left(\hat{F}_{m,n}^S(x)-S_m(F;x)\right) &\xrightarrow{D} \mathcal{N}(0,1) \nonumber \\
        \Leftrightarrow \sqrt{n}\left(\hat{F}_{m,n}^S(x)-S_m(F;x)\right) &\xrightarrow{D} \mathcal{N}\left(0,\sigma^2(x)\right),
    \end{align*}
    which is the claim of \autoref{Theorem:AsySzasz}.
    In our case the Lindeberg condition has the form
    \begin{equation*}
        \frac{\E[\mathbb{I}(|Y_{1,m}^S|>\epsilon \sqrt{n}\gamma_m^S)\left(Y_{1,m}^S\right)^2]}{\left(\gamma_m^S\right)^2} \rightarrow 0 \enspace \text{for} \enspace n \rightarrow \infty \enspace \text{and all} \enspace \epsilon> 0.
    \end{equation*}
    This is what has to be shown to proof the theorem. Using the fact that
    \begin{equation*}
        |Y_{1,m}^S| \leq \sum_{k=0}^\infty \left|\Delta_1\left(\frac{k}{m}\right)\right| V_{k,m}(x) \leq \sum_{k=0}^\infty V_{k,m}(x) = 1
    \end{equation*}
    leads to
    \begin{equation*}
        \mathbb{I}\left(|Y_{1,m}^S|>\epsilon \sqrt{n}\gamma_m^S\right) \leq \mathbb{I}\left(1
        >\epsilon \sqrt{n}\gamma_m^S\right) \rightarrow 0,
    \end{equation*}
    which gives the desired result.
\end{proof}

\begin{proof}[Proof of  \autoref{Theorem:MISES}]
    This proof follows the proof of Theorem 3 in \cite{leblancEstimatingDistributionFunctions2012}.
    We now use the asymptotic expression for $\tilde{R}_{1,m}^S$. Using \autoref{Eq:NeueDarstellung} and\autoref{Lemma:LorentzS} leads to
    \begin{align*}
        &\MISE\left[\hat{F}^S_{m,n}\right]\nonumber\\
        &=\int_0^{\infty} \left[\Var\left[\hat{F}_{m,n}^S(x)\right]+\Bias\left[\hat{F}_{m,n}^S(x)\right]^2\right]e^{-ax}f(x)\diff x \nonumber\\
        &=\frac{1}{n}\int_0^{\infty}\left[\sigma^2(x)+m^{-1/2}f(x)\tilde{R}_{1,m}^S(x)+O_x(m^{-1})\right]e^{-ax}f(x)\diff x\nonumber\\
        &\;+\int_0^{\infty}\left[m^{-1}b^S(x)+o\left(\frac{x}{m}\right)\right]^2e^{-ax}f(x)\diff x\nonumber\\
        &=\frac{1}{n}\int_0^{\infty}\left[\sigma^2(x)+m^{-1/2}f(x)\tilde{R}_{1,m}^S(x)\right]e^{-ax}f(x)\diff x+\int_0^{\infty}m^{-2}(b^S(x))^2e^{-ax}f(x)\diff x\nonumber\\
        &\;+O(m^{-1}n^{-1})+o(m^{-2}).
    \end{align*}
    Now, with $f(x)\frac{\sqrt{x}}{\sqrt{\pi}}=V^S(x)$ and \autoref{Lemma:EigenschaftenLRS} (h), we get
    \begin{align*}
        \MISE\left[\hat{F}_{m,n}^S\right]=n^{-1}C_1^S-n^{-1}m^{-1/2}C_2^S+m^{-2}C_3^S+o(m^{-2})+o(m^{-1/2}n^{-1}).
    \end{align*}
    The integrals $C_i^S$ exist for $i=1,2,3$ because $f$ and $(f')^2$ are  positive and bounded on $[0,\infty)$. It follows that
    \begin{equation*}
        C_1^S=\int_0^{\infty}F(x)(1-F(x))e^{-ax}f(x)\diff x\leq \|f\|\int_0^{\infty}e^{-ax}\diff x=\frac{\|f\|}{a}<\infty,
    \end{equation*}
    \begin{equation*}
      C_2^S=\int_0^{\infty}f(x)\left[\frac{x}{\pi}\right]^{1/2}e^{-ax}f(x)\diff x\leq \frac{\|f\|^2}{\sqrt{\pi}}\int_0^{\infty}\sqrt{x}e^{-ax}\diff x=\frac{\|f\|^2}{2a^{3/2}}<\infty,
    \end{equation*}
    and
    \begin{align*}
       C_3^S&=\int_0^{\infty} \left(\frac{xf'(x)}{2}\right)^2e^{-ax}f(x)\diff x\nonumber\\
       &\leq \frac{\|(f')^2\|\cdot\|f\|}{4}\int_0^{\infty} x^2e^{-ax}\diff x=\frac{\|f'\|^2\|f\|}{2a^3}<\infty,
    \end{align*}
    where the norm is again defined by $\|g\|=\displaystyle\sup_{x \in [0,\infty)}|g(x)|$ for a bounded function $g:[0,\infty)\rightarrow \R$.
\end{proof}

\begin{proof}[Proof of  \autoref{Theorem:DeficiencyS}]
    This proof follows the proof of Theorem 4 in \cite{leblancEstimatingDistributionFunctions2012}. We only present the proof for the local part.
    For simplicity, write $i(n)=i_L^S(n,x)$.

    By the definition of $i(n)$ we know that $\displaystyle\lim_{n \rightarrow \infty}i(n)=\infty$ and
    \begin{align}
        \label{Eq:iLS}
        &\MSE\left[F_{i(n)}(x)\right] \leq \MSE\left[\hat{F}_{m,n}^S(x)\right] \leq \MSE\left[F_{i(n)-1}(x)\right]\nonumber\\
        \Leftrightarrow & \enspace i(n)^{-1}\sigma^2(x) \leq n^{-1}\sigma^2(x)-m^{-1/2}n^{-1}V^S(x)+m^{-2}(b^S(x))^2\nonumber\\
        &+o_x(m^{-1/2}n^{-1})+o_x(m^{-2})\leq (i(n)-1)^{-1}\sigma^2(x)\nonumber\\
        \Leftrightarrow & \enspace 1 \leq \frac{i(n)}{n}\left[1-m^{-1/2}\theta^S(x)+m^{-2}n\gamma^S(x)+o_x(m^{-1/2})+o_x(m^{-2}n)\right]\nonumber\\
        &\enspace\leq\frac{i(n)}{i(n)-1},
    \end{align}
    where $\theta^S(x)=\frac{V^S(x)}{\sigma^2(x)}$ and $\gamma^S(x)=\frac{(b^S(x))^2}{\sigma^2(x)}$.
    Now, if $mn^{-1/2} \rightarrow \infty$ ($\Leftrightarrow m^{-2}n \rightarrow 0$), taking the limit $n\rightarrow \infty$ leads to
    \begin{equation*}
        \label{Eq:LimitS}
        \frac{i(n)}{n} \rightarrow 1,
    \end{equation*}
    so that
    \begin{equation*}
        i(n)=n+o_x(n)=n(1+o_x(1)).
    \end{equation*}

    \begin{enumerate}
    \item[(a)]
    We assume that $mn^{-2/3}\rightarrow \infty$ and $mn^{-2}\rightarrow 0$.
    Rewrite \autoref{Eq:iLS} as
    \begin{align}
        \label{Eq:InequalityS}
        &m^{-1/2}n^{-1}\theta^S(x)\leq A_{1,n}+m^{-2}\gamma^S(x)+o_x(m^{-1/2}n^{-1})+o_x(m^{-2})\nonumber\\
        &\leq m^{-1/2}n^{-1}\theta^S(x)+A_{2,n}\nonumber\\
        \Leftrightarrow & \enspace\theta^S(x) \leq m^{1/2}nA_{1,n}+m^{-3/2}n\gamma^S(x)+o_x(1)+o_x(m^{-3/2}n)\nonumber\\
        &\leq \theta^S(x)+m^{1/2}nA_{2,n},
    \end{align}
    where
    \begin{equation*}
        A_{1,n}=\frac{1}{n}-\frac{1}{i(n)} \enspace \text{and} \enspace A_{2,n}=\frac{1}{i(n)-1}-\frac{1}{i(n)}.
    \end{equation*}
    It holds that
    \begin{equation*}
        \label{Eq:A1nS}
        \lim_{n\rightarrow \infty} m^{1/2}nA_{1,n}=\left(\lim_{n\rightarrow \infty}\frac{i(n)-n}{m^{-1/2}n}\right)\left(\lim_{n\rightarrow \infty}\frac{n}{i(n)}\right)=\lim_{n\rightarrow \infty}\frac{i(n)-n}{m^{-1/2}n},
    \end{equation*}
    and because $m^{1/2}n^{-1}=(mn^{-2})^{1/2} \rightarrow 0$,
    \begin{equation*}
        \label{Eq:A2nS}
        \lim_{n\rightarrow \infty} m^{1/2}nA_{2,n}=\left(\lim_{n\rightarrow \infty}m^{1/2}n^{-1}\right)\left(\lim_{n\rightarrow \infty}\frac{n}{i(n)}\right)\left(\lim_{n\rightarrow \infty}\frac{n}{i(n)-1}\right)=0.
    \end{equation*}
    We also know that $m^{-3/2}n=(mn^{-2/3})^{-3/2}\rightarrow 0$, hence
    \begin{equation*}
        \lim_{n\rightarrow \infty}\frac{i(n)-n}{m^{-1/2}n}=\theta^S(x) \Rightarrow \frac{i(n)-n}{m^{-1/2}n}=\theta^S(x)+o_x(1)
    \end{equation*}
    follows from \Eq{Eq:InequalityS}.

    \item[(b)]
    The second part can be proven with similar arguments. If $mn^{-2/3} \rightarrow c$ it also holds that $m^{-2}n=(mn^{-2/3})^{-3/2}m^{-1/2} \rightarrow 0$ and $m^{1/2}n^{-1}=(mn^{-2/3})^{1/2}n^{-2/3} \rightarrow 0$ so that we get that
    \begin{equation*}
        \lim_{n\rightarrow \infty}\frac{i(n)-n}{m^{-1/2}n}=\theta^S(x)-c^{-3/2}\gamma^S(x)
    \end{equation*}
    and with
    \begin{align*}
        \lim_{n\rightarrow \infty}\frac{i(n)-n}{m^{-1/2}n}&=\left(\lim_{n\rightarrow \infty}\frac{i(n)-n}{n^{2/3}}\right)\left(\lim_{n\rightarrow \infty}m^{1/2}n^{-1/3}\right)\nonumber\\
        &=c^{1/2}\lim_{n\rightarrow \infty}\frac{i(n)-n}{n^{2/3}}
    \end{align*}
    the claim
    \begin{equation*}
        c^{1/2}\frac{i(n)-n}{n^{2/3}}=\theta^S(x)-c^{-3/2}\gamma^S(x)+o_x(1)
    \end{equation*}
    holds.
    \end{enumerate}

    The global part can be proved analogously with $\tilde{\theta}^S=\frac{C_2^S}{C_1^S}$ and $\tilde{\gamma}^S=\frac{C_3^S}{C_1^S}$.
\end{proof}

\subsection*{Proof Asymptotic Normality of the Hermite Estimator on the Half Line}

This proof takes some ideas from the proof of Theorem 2 in \cite{leblancEstimatingDistributionFunctions2012}.
    For fixed $N$ it holds that
    \begin{align*}
        \hat{F}_{N,n}^H(x)-\E\left[\hat{F}_{N,n}^H(x)\right]
        &=\int_0^x\sum_{k=0}^N\hat{a}_kh_k(t)\diff t-\int_0^x\sum_{k=0}^Na_kh_k(t)\diff t\\
        &=\int_0^x\sum_{k=0}^N\left[\frac{1}{n}\sum_{i=1}^nh_k(X_i)\right]h_k(t)\diff t-\int_0^x\sum_{k=0}^Na_kh_k(t)\diff t\\
        &=\frac{1}{n}\sum_{i=1}^n\left[\int_0^xT_N(X_i,t)\diff t -\int_0^x\sum_{k=0}^Na_kh_k(t)\diff t\right]\\
        &=\frac{1}{n}\sum_{i=1}^nY_{i,N},
    \end{align*}
    where
    \begin{equation*}
        T_N(x,y)=\sum_{k=0}^Nh_k(x)h_k(y)
    \end{equation*}
    and
    \begin{equation*}
        Y_{i,N}=\int_0^x\left[T_N(X_i,t)-\sum_{k=0}^Na_kh_k(t)\right]\diff t, i \in \{1,...,n\}.
    \end{equation*}
    The $Y_{i,N}$ are i.i.d. random variables with mean $0$. Define $\gamma_N^2=\E[Y_{1,N}^2]$.
    We use the central limit theorem for double arrays (see \cite{serflingApproximationTheoremsMathematical1980}, Section 1.9.3) to show the claim.
    Defining
    \begin{equation*}
        A_n=\E\left[\sum_{i=1}^nY_{i,N}\right]=0 \enspace \text{and} \enspace B_n^2=\Var\left[\sum_{i=1}^nY_{i,N}\right]=n \gamma_N^2,
    \end{equation*}
    it says that
    \begin{equation*}
        \frac{\sum_{i=1}^nY_{i,N}-A_n}{B_n} \xrightarrow{D} \mathcal{N}(0,1)
    \end{equation*}
    if and only if the Lindeberg condition
    \begin{equation*}
        \frac{n\E[\mathbb{I}(|Y_{1,N}|>\epsilon B_n)Y_{1,N}^2]}{B_n^2} \rightarrow 0 \enspace \text{for} \enspace n \rightarrow \infty \enspace \text{and all} \enspace \epsilon> 0
    \end{equation*}
    is satisfied.
    It holds for $n \rightarrow \infty$ that
    \begin{align*}
        \frac{\sum_{i=1}^nY_{i,N}-A_n}{B_n} \xrightarrow{D} \mathcal{N}(0,1)
        & \Leftrightarrow \frac{\sum_{i=1}^nY_{i,N}}{\sqrt{n}\cdot \gamma_N} \xrightarrow{D} \mathcal{N}(0,1) \\
        &\Leftrightarrow \frac{\sqrt{n}}{\gamma_N} \left(\hat{F}_{N,n}^H(x)-\E\left[\hat{F}_{N,n}^H(x)\right]\right) \xrightarrow{D} \mathcal{N}(0,1) \\
        &\Leftrightarrow \sqrt{n}\left(\hat{F}_{N,n}^H(x)-\E\left[\hat{F}_{N,n}^H(x)\right]\right) \xrightarrow{D} \mathcal{N}\left(0,\sigma^2(x)\right).
    \end{align*}
    The last equivalence holds because of the following. We have to calculate $\gamma_N^2$ which is given by
    \begin{equation}
        \label{Eq:GammaHalf}
        \begin{gathered}
            \gamma_N^2=\E\left[\left(\int_0^xT_N(X_1,t)\diff t-\int_0^x\sum_{k=0}^Na_kh_k(t)\diff t\right)^2\right]\\
            =\E\left[\left(\int_0^xT_N(X_1,t)\diff t\right)^2\right]-2\int_0^x\sum_{k=0}^Na_kh_k(t)\diff t\cdot\E\left[\int_0^xT_N(X_1,t)\diff t\right]\\
            +\left(\int_0^x\sum_{k=0}^Na_kh_k(t)\diff t\right)^2.
        \end{gathered}
    \end{equation}
    The first part is the only part where we do not know the asymptotic behavior. Hence, we now take a closer look at this part.
    With Eq.(A8) in \cite{liebscherHermiteSeriesEstimators1990}, which only holds on compact sets, we know that
   \begin{align}
        \label{Eq:GammaFirstPart}
%        \begin{gathered}
            \E\left[\left(\int_0^xT_N(X_1,t)\diff t\right)^2\right]
            =& \lim_{P\rightarrow \infty} \int_0^P\left[\int\limits_0^x\frac{\sin (M(r-t))}{\pi(r-t)}+O(N^{-1/2})\diff t\right]^2f(r)\diff r \nonumber\\
            =& \int_0^\infty\left[\int\limits_0^x\frac{\sin (M(r-t))}{\pi(r-t)}+O(N^{-1/2})\diff t\right]^2f(r)\diff r \nonumber\\
            =& \int_0^x\left[\int\limits_0^x\frac{\sin (M(r-t))}{\pi(r-t)}+O(N^{-1/2})\diff t\right]^2f(r)\diff r \nonumber \\
             &+\int_x^\infty\left[\int\limits_0^x\frac{\sin (M(r-t))}{\pi(r-t)}+O(N^{-1/2})\diff t\right]^2f(r)\diff r,
%        \end{gathered}
    \end{align}
    where $M=\frac{\sqrt{2n+3}+\sqrt{2n+1}}{2}$.
    The inner integral can be written as
    \begin{equation*}
        \int_0^x\frac{\sin (M(r-t))}{\pi(r-t)}\diff t=\int_{M(r-x)}^{Mr}\frac{\sin(l)}{\pi l}\diff l
    \end{equation*}
    and with the fact that for $M \rightarrow \infty$,
    \begin{equation*}
        \int\limits_{Ma}^{Mb}\frac{\sin(l)}{\pi l}dl \rightarrow
        \begin{cases}
            1, & a < 0 < b,\\
            0, & 0 < a < b,\\
            0, & a < b < 0,
        \end{cases}
    \end{equation*}
    it follows with \autoref{Eq:GammaFirstPart} for $n \rightarrow \infty$ (which implies $M \rightarrow \infty$) that
    \begin{equation}
        \label{Eq:Limit}
        \E\left[\left(\int_0^xT_N(X_1,t)\diff t\right)^2\right] \rightarrow \int_0^x f(r)\diff r = F(x).
    \end{equation}
    In the end of the proof, it is explained in detail why it is possible to move the limit $M \to \infty$ inside the integral. Then, plugging \autoref{Eq:Limit} in \autoref{Eq:GammaHalf} and using the fact that we know limits of the other parts from Lemma 1 in \cite{greblickiPointwiseConsistencyHermite1985}, it holds for $n \rightarrow \infty$ that
    \begin{equation*}
        \label{Eq:GammaErgebnisHalf}
        \gamma_N^2 \rightarrow F(x)-2F(x)^2+F(x)^2=\sigma^2(x).
    \end{equation*}
    Now, we have to show that asymptotic normality actually holds.
    In our case the Lindeberg condition has the form
    \begin{equation*}
        \frac{\E\left[\mathbb{I}(|Y_{1,N}|>\epsilon \sqrt{n}\gamma_N)Y_{1,N}^2\right]}{\gamma_N^2} \rightarrow 0 \enspace \text{for} \enspace n \rightarrow \infty \enspace \text{and all} \enspace \epsilon> 0.
    \end{equation*}
    This is what has to be shown to prove the theorem. Writing the expected value as an integral, we get
    \begin{align*}
        \int_0^{\infty} \mathbb{I}\left(\left|A_N(r)\right|>\epsilon \sqrt{n}\gamma_N\right)A_N(r)^2f(r)\diff r
    \end{align*}
    with
    \begin{equation*}
        A_{N}(r)=\int_0^x\left[T_N(r,t)-\sum_{k=0}^Na_kh_k(t)\right]\diff t.
    \end{equation*}
    With the arguments from above, the left side of the inequality in the indicator function is bounded by a constant, depending on $x$, for large $n$.
    Using this result, we get for large $n$ that
    \begin{align*}
        \frac{\E\left[\mathbb{I}(|Y_{1,N}|>\epsilon \sqrt{n}\gamma_N)Y_{1,N}^2\right]}{\gamma_N^2} &\leq \mathbb{I}(c_x>\epsilon \sqrt{n} \gamma_N)\frac{\E\left[Y_{1,N}^2\right]}{\gamma_N^2}\nonumber\\
        &=\mathbb{I}\left(\frac{c_x}{\sqrt{n}\gamma_N}>\epsilon\right) \rightarrow 0,
    \end{align*}
    where $c_x$ is a constant depending on $x$, which proves the claim.

    We explain now, why it is possible to exchange limit and integral in the calculation of the limit of $\gamma_N$. We first observe that for $x \neq 0$,
\begin{equation*}
    -\frac{1}{\pi |x|}-\frac{1}{2} \leq \int_0^x\frac{\sin(l)}{\pi l}\diff l \leq \frac{1}{\pi |x|}+\frac{1}{2}.
\end{equation*}
It follows that
\begin{equation*}
    \left(\int_0^x\frac{\sin(l)}{\pi l}\diff l\right)^2 \leq \left(\frac{1}{\pi |x|}+\frac{1}{2}\right)^2.
\end{equation*}
Hence, for $r \in \{0,x\}$,
\begin{equation*}
    \left(\int_{M(r-x)}^{Mr}\frac{\sin(l)}{\pi l}\diff l\right)^2 \leq \left(\frac{1}{\pi |M x|}+\frac{1}{2}\right)^2,
\end{equation*}
and for the rest,
\begin{align*}
    &\left(\int_{M(r-x)}^{Mr}\frac{\sin(l)}{\pi l}\diff l\right)^2 = \left(\int_0^{M r}\frac{\sin(l)}{\pi l}\diff l-\int_0^{M(r-x)}\frac{\sin(l)}{\pi l}\diff l\right)^2\nonumber\\
    &\leq \left(\frac{1}{\pi |M r|}+\frac{1}{2}\right)^2 + 2\left(\frac{1}{\pi |M r|}+\frac{1}{2}\right)\left(\frac{1}{\pi |M (r-x)|}+\frac{1}{2}\right)\nonumber\\
    &\enspace+\left(\frac{1}{\pi |M (r-x)|}+\frac{1}{2}\right)^2.
\end{align*}

\SingleFigureHere{0.75\textwidth}{MathematicaHermite}{Illustration of the bounds for $M=300$, $x=1$.}{MathematicaHermite}

In \autoref{MathematicaHermite}, the two bounds calculated above are illustrated. The orange line is the bound for $r \in \{0,x\}$ and the green line is the bound for the rest. The only critical parts are close to $r=0$ and $r=x$, where the function attains its maximum.
It is obvious that the maximum value is given by
\begin{align*}
    \left(\frac{1}{\pi |M r_{\max}|}+\frac{1}{2}\right)^2 &+ 2\left(\frac{1}{\pi |M r_{\max}|}+\frac{1}{2}\right)\left(\frac{1}{\pi |M (r_{\max}-x)|}+\frac{1}{2}\right)\nonumber\\
    &+\left(\frac{1}{\pi |M (r_{\max}-x)|}+\frac{1}{2}\right)^2,
\end{align*}
where the function attains the maximum value in $r_{\max}$. Now, for $M \geq M_0$, this is bounded by
\begin{align*}
    \left(\frac{1}{\pi |M_0 r_{\max}|}+\frac{1}{2}\right)^2 &+ 2\left(\frac{1}{\pi |M_0 r_{\max}|}+\frac{1}{2}\right)\left(\frac{1}{\pi |M_0 (r_{\max}-x)|}+\frac{1}{2}\right)\nonumber\\
    &+\left(\frac{1}{\pi |M_0 (r_{\max}-x)|}+\frac{1}{2}\right)^2.
\end{align*}
The part $O(N^{-\frac{1}{2}})$ in \autoref{Eq:GammaFirstPart} is very small for large $M \geq M_0$ and does not change the fact that the function is bounded. We call the bound $d_x$.
This is a function that is integrable because
\begin{equation*}
    \int_0^{\infty}d_xf(r)dr=d_x < \infty.
\end{equation*}
With the dominated convergence theorem, it is possible to move the limit over $M$ inside the integral.

\bibliographystyle{spbasic}
\bibliography{Referenzen}

\end{document}